\newtheorem{theorem}{Theorem}[section]
\newtheorem{lemma}[theorem]{Lemma}
\newtheorem{corollary}[theorem]{Corollary}
\newtheorem{proposition}[theorem]{Proposition}
\theoremstyle{definition}
\newtheorem{definition}[theorem]{Definition}
\newtheorem{example}[theorem]{Example}
\newtheorem{remark}[theorem]{Remark}
\numberwithin{equation}{section}
\newcommand{\R}{\mathbb R}
\newcommand{\N}{\mathbb N}
\newcommand{\B}{\mathbb B}
\newcommand{\Sfer}{\mathbb S}
\newcommand{\X}{\mathbb X}
\newcommand{\dom}{{\rm dom}\, }
\newcommand{\Grph}{{\rm Gr}\,}
\newcommand{\Lev}{{\rm Lev}\,}
\newcommand{\nullv}{\mathbf{0}}
\newcommand{\inff}{\underline{\alpha}}
\newcommand{\clco}{{\rm cl\, conv}\, }
\newcommand{\front}{{\rm bd}\, }
\newcommand{\conv}{{\rm conv}\, }
\newcommand{\inte}{{\rm int}\, }
\newcommand{\weakstar}{weak${}^*\, $ }
\newcommand{\Haus}{{\rm Haus}}
\newcommand{\lev}[2]{{\rm lev}_{\le #1}\, {#2}}
\newcommand{\ball}[2]{{\rm B}\left(#1, #2\right)}
\newcommand{\dist}[2]{{\rm dist}\left(#1,#2\right)}
\newcommand{\stsl}[2]{|\nabla #1|(#2)}
\newcommand{\Argmin}[1]{{\rm Argmin}(#1)}
\newcommand{\Fix}[1]{{\rm Fix}(#1)}
\newcommand{\shar}[1]{{\rm sha}(#1)}
\newcommand{\wshar}[1]{{\rm wsha}(#1)}
\newcommand{\Hadlowder}[1]{{#1}^\downarrow_\mathcal{H}}
\newcommand{\Hadupder}[1]{{#1}^\uparrow_\mathcal{H}}
\newcommand{\GDer}{\nabla}
\newcommand{\FDer}{\hat{\rm D}}
\newcommand{\CompConv}{\mathcal{K}}
\newcommand{\Exhaust}{\underline{\mathcal E}}
\newcommand{\SLin}{\mathcal{S}}
\newcommand{\qdsub}{\underline{\partial}}
\newcommand{\qdsup}{\overline{\partial}}
\newcommand{\qdif}{\mathcal{D}}
\newcommand{\Clasubd}{\partial^\circ}
\newcommand{\suppf}[2]{\varsigma(#1, #2)}
\newcommand{\Demdif}{{\ \underline\vartriangle\ }}
\newcommand{\Demcoqd}{{\qdif}^\Demdif\hskip-0.05cm}
\newcommand{\Exhaustder}[2]{{\mathcal E}^\downarrow_\mathcal{H}(#1,#2)}
\begin{document}

\title{Some dual conditions for global weak sharp minimality of nonconvex functions}

\author{A. Uderzo}

\address{}

\address{A. Uderzo: Dept. of Mathematics and Applications, Universit\`a di
Milano-Bicocca, Via Cozzi, 53 - 20125 Milano, Italy, e-mail: {\tt amos.uderzo@unimib.it}}


\subjclass{Primary: 49J52; Secondary: 49J53, 90C48}

\date{\today}


\keywords{Global weak sharp minimizers, variational principle,
strong slope, quasidifferential calculus, lower exhausters,
nondegeneracy condition}

\begin{abstract}
Weak sharp minimality is a notion emerged in optimization, whose
utility is largeley recognized in the convergence analysis of
algorithms for solving extremum problems as well as in the
study of the perturbation behaviour of such problems. In the
present paper some dual constructions of nonsmooth analysis,
mainly related to quasidifferential calculus and its recent
developments, are employed in formulating sufficient conditions
for global weak sharp minimality. They extend to nonconvex functions
a condition, which is  known to be valid in the convex case. A
feature distinguishing the results here proposed is that they
avoid to assume the Asplund property on the underlying space.
\end{abstract}

\maketitle


\section{Introduction and motivations}

Given a function $f:X\longrightarrow\R\cup\{+\infty\}$, consider the
minimization problem
$$
    \min_{x\in X} f(x).   \leqno(\mathcal{P})
$$
Besides the fundamental question on the existence of solutions to
$(\mathcal{P})$, i.e. minimizers, an issue of interest is also to
single out special behaviours of $f$ able to describe `how' minima
(or, in the global case, the minimum) are attained. Such an issue,
which reveals to be crucial for the convergence analysis of
algorithms designed to solve problem $(\mathcal{P})$, led
to define the notion of weak sharp minima, which is considered
here in its global formulation.

In what follows $(X,d)$ denotes a metric space. Given $\alpha\in\R
\cup\{+\infty\}$, by $\lev{\alpha}{f}=\{x\in X:\ f(x)\le\alpha\}$
the $\alpha$ sublevel set of $f$ is indicated. The distance of point
$x\in X$ from a set $A\subseteq X$ is denoted by
$\dist{x}{A}=\displaystyle\inf_{a\in A}d(x,a)$, with
the convention $\dist{x}{\varnothing}=+\infty$, for every $x\in X$.
Set for convenience-sake $\displaystyle\inf_{x\in X}f(x)=\inff$.
Throughout the paper it will be assumed that $f$ is bounded from
below on $X$, that is $\inff>-\infty$, unless otherwise stated.
Moreover, since the issue
under investigation loses its interest if $f\equiv+\infty$, $f$ will
be assumed also to be proper, i.e. $\dom(f)\ne\varnothing$.
$\Argmin{f}=\lev{\inff}{f}$ represents the set of all global solutions
to problem $(\mathcal{P})$, if any. The acronym l.s.c. (respectively,
u.s.c.) abbreviates lower (respectively, upper) semicontinuous, with
reference to both functions and set-valued maps.

\begin{definition}      \label{def:gwsha}
A function  $f:X\longrightarrow\R\cup\{+\infty\}$ defined on a metric
space is said to have {\em global weak sharp minimizers} if 
$\Argmin{f}\ne\varnothing$ and there exists $\sigma>0$ such that
\begin{equation}    \label{in:defwsharmin}
    \sigma\cdot\dist{x}{\Argmin{f}}\le f(x)-\inff,\quad\forall x\in X.
\end{equation}
The supremum over all values $\sigma$ satisfying inequality
(\ref{in:defwsharmin}) will be called {\em modulus of global weak
sharpness} of $f$ and denoted by $\wshar{f}$.

If, in particular, $\Argmin{f}$ reduces to a singleton $\{\bar x\}$,
so that inequality (\ref{in:defwsharmin}) becomes
\begin{equation}   \label{in:defsharmin}
   \sigma d(x,\bar x)\le f(x)-\inff,\quad\forall x\in X,
\end{equation}
such an element $\bar x$ is called {\em sharp minimizer} of $f$ and
the related sharpness modulus is denoted by $\shar{f}$.
\end{definition}

For historical remarks on the appearance of weak sharpness
in optimization the reader is referred to \cite{BurDen02}.
As a comment to Definition \ref{def:gwsha} it is worth noting
that, in a normed space setting, if $f$ admits a sharp minimizer,
then the growth condition (\ref{in:defsharmin}) immediately
implies that $f$ is a coercive function, in the sense that
$$
    \lim_{\|x\|\to\infty} f(x)=+\infty.
$$
Nevertheless, the notion of weak sharp minimality is completely
independent of coercivity. In fact, a function with global
weak sharp minimizers may happen to have sublevel set
$\Argmin{f}=\lev{\inff}{f}$ not bounded, thereby failing to be
coercive. On the other hand, a function such as $x\mapsto
\|x\|^2$ is trivially coercive but it admits no global weak sharp
minimizers. In other words, the nature of these two properties,
although both related to global minimality, appears to be totally
different.

Another aspect of Definition \ref{def:gwsha} to be remarked
is that global weak sharp minimality is a property of function
$f$ itself, not a property qualifying some of its global minimizers.
This makes global weak sharp minimality different from its
local counterpart. Since, whenever it exists, the global minimum
of a function is unique, in the global case it makes no sense
to speak of `weak sharp minim\hskip-0.01cm{\em a}'. For this
reason, in formulating Definition \ref{def:gwsha}, the term
`weak sharp minimizers' has been preferred to the one largely
employed in the existent literature, namely `weak sharp minima'.

Further features of weak sharp minimizers come up when $X$ has
some more structure. Let us mention in this concern that, if $X$
is a Banach space and $f$ is a convex function not constant on
$X$, then the occurence of global weak sharp minimizers is
incompatible with the differentiability of $f$
(see, for instance, Proposition 2.4 in \cite{NgZhe03}).

In what follows several  exemplary contexts from optimization
and variational analysis are illustrated, in which the concept
of weak sharp minimizer naturally occurs. The resulting insight
should enlighten the crucial role played by the notion under
consideration in the mentioned disciplines.

\begin{example}
A function $f:X\longrightarrow\R\cup\{+\infty\}$ defined on a metric
space is said to have a {\em global error bound} if there exists
$\zeta>0$ such that
$$
    \dist{x}{\lev{\alpha}{f}}\le\zeta[f(x)-\alpha]_+,\quad\forall
    \alpha\in\R,\ \forall x\in\ X,
$$
where, given any real $r$, it is $[r]_+=\max\{r,0\}$. Thus, the property
of having global weak sharp minimizers amounts to the existence
of a global error bound in the particular case in which $\alpha=
\inff$, with $\zeta\ge\wshar{f}^{-1}$.
Notice that, with the convention here adopted $\dist{x}
{\varnothing}=+\infty$, the existence of a global error bound
for a function bounded from below entails that $\Argmin{f}\ne
\varnothing$.
\end{example}

\begin{example} Let $F:X\longrightarrow 2^X$ a set-valued map
defined and taking closed and bounded values on the same
metric space $(X,d)$. Suppose that $F$ is a contraction
of $X$, i.e. there exists $\kappa\in [0,1)$ such that
$$
    \Haus(F(x_1),F(x_2))\le\kappa d(x_1,x_2),\quad\forall
    x_1,\, x_2\in X,
$$
where $\Haus(A,B)$ indicates the Hausdorff distance of
the closed and bounded sets $A$ and $B$.
If $X$ is metrically complete the displacement function
associated to $F$, namely function $\phi_F:X\longrightarrow
[0,+\infty)$ defined as
$$
    \phi_F(x)=\dist{x}{F(x)},
$$
turns out to admit global weak sharp minimizers. Actually, they
coincide with the fixed points of $F$. Indeed, if denoting by
$\Fix{F}$ the set of all fixed points of $F$, it is known from the
Covitz-Nadler theorem on fixed points that $\Fix{F}\ne\varnothing$
and, for every $\kappa'\in (\kappa,1)$, it results in
\begin{equation}
    \dist{x}{\Fix{F}}\le\frac{\dist{x}{F(x)}}{1-\kappa'},\quad\forall x\in X,
\end{equation}
(see \cite{CovNad70,Nadl69}).
Thus, one deduces that $\wshar{\phi_F}\ge 1-\kappa$.
Notice that, generally speaking, function $\phi_F$ fails to admit a
global sharp minimizer. Nevertheless, since $\Fix{F}$ reduces to a
singleton whenever $F$ is single-valued on $X$ according to the
Banach-Caccioppoli contraction principle, then in the
latter case $\phi_F$ admits a global sharp minimizer and $\shar{\phi_F}
\ge 1-\kappa$.
\end{example}

\begin{example}
According to \cite{KlaKum02} a set-valued map $F:Y\longrightarrow 2^X$ between
metric spaces is said to be {\em calm} at $(y_0,x_0)\in\Grph F$ if
there exist $r>0$ and $\gamma>0$ such that
$$
    \dist{x}{F(y_0)}\le\gamma d(y,y_0),\quad\forall x\in F(y)\cap
   \ball{x_0}{r},\ \forall y\in\ball{y_0}{r},
$$
where $\ball{x}{r}=\lev{r}{d(\cdot,x)}$.
Let  $f:X\longrightarrow\R\cup\{+\infty\}$ be a given function.
Take $Y=\R$, $y_0=\inff$ and consider the set-valued map $\Lev f:\R
\longrightarrow 2^X$ associated to $f$ as follows
$$
    \Lev f(\alpha)=\lev{\alpha}{f}.
$$
Assume that $\Argmin{f}\ne\varnothing$ and that $\Lev f$ is calm
at each pair $(\inff,x)$, where $x\in\Argmin{f}$, and with $r=+\infty$
and a uniform constant $\gamma$. Then, being
$$
    \dist{x}{\Argmin{f}}=\dist{x}{\Lev f(\inff)}\le\gamma |f(x)-\inff|=
    \gamma(f(x)-\inff),\quad\forall x\in\Lev f(f(x))=X,
$$
one sees that under the above global calmness condition
function $f$ has global weak sharp minimizers.
\end{example}

An useful achievement of modern variational analysis is that the
calmness property of a given map captures a Lipschitz behaviour
that generalizes the metric regularity of its inverse, known as
metric subregularity. This fact leads to the next context in which
weak sharp minimality arises.

\begin{example}
Let $f:X\longrightarrow\R\cup\{+\infty\}$ be a given function
defined on a metric space. According to \cite{Ioff79}, $f$ is said to be
{\em metrically subregular} at $x_0\in X$ for $\alpha_0\in\R$ if
there exist $r>0$ and $\zeta>0$ such that
$$
    \dist{x}{f^{-1}(\alpha_0)}\le\zeta |f(x)-\alpha_0|,\quad
   \forall x\in\ball{x_0}{r}.
$$
From the above inequality, one readily sees that any function $f$,
which is metrically subregular at each point of $X$ for $\inff$,
with the same constant $\zeta$, admits global weak sharp minimizers.
\end{example}

\begin{example}
A given function  $f:X\longrightarrow\R\cup\{+\infty\}$ defined on a
topological space $X$ is said to be {\em Tikhonov well-posed} if
$\Argmin{f}=\{\bar x\}$ and for every sequence $(x_n)_{n\in\N}$ in
$X$, such that $f(x_n)\to\inff$ as $n\to\infty$ (minimizing sequence),
one has $x_n\to\bar x$.
It is clear that, in a metric space setting, whenever $f$ admits
a global sharp minimizer it is also Tikhonov well-posed. The
converse is evidently false, in general. If $f$ has global weak sharp minimizers
without a global sharp minimizer, from inequality (\ref{in:defwsharmin})
one easily deduces that any minimizing sequence is metrically
attracted by $\Argmin{f}$. Thus, global weak sharp minimality leads to
a set valued-like generalization of the Tikhonov well-posedness.
\end{example}

In consideration of its theoretical and computational relevance, weak
sharp minimality has been the subject of manifold investigations
within optimization and variational analysis (see, for instance,
\cite{Bedn07,BurDen02,BurDen05,BurDen09,BurFer93,NgZhe03,StuWar99,
Zali01}).
A topic considered in several of them is the problem of finding methods to
detect the occurence of weak sharp minimality. The present paper
intends to focus on this topic. Following a recognized line of reasearch,
the goal of the analysis here proposed is to extend a known condition
valid for convex functions beyond the realm of convexity. Since, as
already mentioned, weak sharp minimality is incompatible with
differentiability, this task is pursued by making use of nonsmooth
analysis tools. In particular, sufficient conditions for global weak
sharp minimality are presented in the case of problems $(\mathcal{P})$
with objective function quasidifferentiable or with generalized
derivatives admitting lower exhausters. In both cases, the key role
is played by respective nondegeneracy conditions involving
subdifferential-like dual constructions. A feature of the present
analysis is that the resulting conditions are achieved as a consequence
of a general analysis conducted in a metric space setting.

The material exposed in rest of the paper is arranged as follows.
In Section \ref{Sect1} a characterization and a sufficient condition
for global weak sharp minimality are established and discussed in
a metric space setting.
In Section \ref{Sect2} some material from nonsmooth analysis,
which is needed in order to formulate nondegeneracy conditions,
is briefly recalled and some related ancillary result is proved.
Section \ref{Sect3} is reserved to present and comment the main
results of the paper. They consider both unconstrained and variously
constrained extremum problems.


\section{Global weak sharp minimality in metric spaces}   \label{Sect1}

Even though metric space is in structure too poor for certain
applications, nevertheless it should be the proper environment
where to analyze the notion of weak sharp minimality `juxta propria
principia', in consideration of the purely metric nature of its
definition. In fact, all dual conditions presented in Section
\ref{Sect3} will be derived from basic results established
in the present section.
Let us start with a characterization for $f$ to have
global weak sharp minimizers, which relies on the behaviour of
sublevel sets of $f$.

\begin{theorem}     \label{thm:wsharchalev}
Let $(X,d)$ be a complete metric space. A function $f:X\longrightarrow
\R\cup\{+\infty\}$ l.s.c. on $X$ admits global  weak sharp minimizers
with modulus  $\wshar{f}\ge\tau>0$ iff
\begin{equation}      \label{in:wshachar}
    \tau\cdot\sup_{\alpha\in (\inff,+\infty)}\dist{x}{\lev{\alpha}{f}}\le
    f(x)-\inff,\quad\forall x\in X.
\end{equation}
\end{theorem}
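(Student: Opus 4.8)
The plan is to prove the two implications separately, after a preliminary reduction clarifying the meaning of $\wshar{f}\ge\tau$. For each fixed $x$ the set of exponents $\sigma>0$ fulfilling $\sigma\,\dist{x}{\Argmin{f}}\le f(x)-\inff$ is a closed half-line, so the set of $\sigma$ that work simultaneously for all $x\in X$ is closed; consequently $\wshar{f}\ge\tau$ together with $\Argmin{f}\ne\varnothing$ is equivalent to the validity of $\tau\,\dist{x}{\Argmin{f}}\le f(x)-\inff$ for every $x\in X$. I would record this equivalence first, so that the theorem reduces to comparing $\dist{x}{\Argmin{f}}$ with $\displaystyle\sup_{\alpha\in(\inff,+\infty)}\dist{x}{\lev{\alpha}{f}}$.

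Necessity is the elementary direction. Since $\Argmin{f}=\lev{\inff}{f}\subseteq\lev{\alpha}{f}$ for every $\alpha>\inff$, the distance $\dist{x}{\lev{\alpha}{f}}$ is nonincreasing in $\alpha$ and dominated by $\dist{x}{\Argmin{f}}$; taking the supremum over $\alpha>\inff$ and multiplying by $\tau$ yields $\tau\sup_{\alpha>\inff}\dist{x}{\lev{\alpha}{f}}\le\tau\,\dist{x}{\Argmin{f}}\le f(x)-\inff$, which is exactly (\ref{in:wshachar}).

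For sufficiency I would argue constructively, in the spirit of the variational principle and exploiting completeness. Fix $x_0\in\dom(f)$ and set $\rho=(f(x_0)-\inff)/\tau$; I also fix summable tolerance sequences $\delta_n\downarrow 0$ and $\eta_n\downarrow 0$ with $\sum_{n\ge1}(\delta_n/\tau+\eta_n)$ as small as desired. Applying (\ref{in:wshachar}) at a point $x_n$ gives $\dist{x_n}{\lev{\alpha}{f}}\le(f(x_n)-\inff)/\tau$ for every $\alpha>\inff$, so one may pick $x_{n+1}\in\lev{\inff+\delta_{n+1}}{f}$ with $d(x_n,x_{n+1})\le(f(x_n)-\inff)/\tau+\eta_{n+1}$. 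Since by construction $f(x_n)\le\inff+\delta_n$ for $n\ge1$, the step lengths past the first are bounded by $\delta_n/\tau+\eta_{n+1}$, while the first step obeys $d(x_0,x_1)\le\rho+\eta_1$.

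The sequence $(x_n)$ is therefore Cauchy, its total displacement being at most $\rho+\sum_{n\ge1}(\delta_n/\tau+\eta_n)$; completeness provides a limit $\bar x$, and the lower semicontinuity of $f$ together with $f(x_n)\to\inff$ forces $f(\bar x)=\inff$, i.e. $\bar x\in\Argmin{f}$ (this already yields $\Argmin{f}\ne\varnothing$). Hence $\dist{x_0}{\Argmin{f}}\le d(x_0,\bar x)\le\rho+\sum_{n\ge1}(\delta_n/\tau+\eta_n)$, and letting the tolerances shrink to $0$ gives $\tau\,\dist{x_0}{\Argmin{f}}\le f(x_0)-\inff$; the case $x_0\notin\dom(f)$ is trivial. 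I expect the main obstacle to be precisely this bookkeeping of the telescoping estimate: the construction must load the \emph{entire} essential length onto the very first step, so that the surviving constant is exactly $\rho=(f(x_0)-\inff)/\tau$ and not a larger multiple, while all subsequent increments are absorbed into an arbitrarily small remainder; recovering the sharp modulus $\tau$ rather than some $\tau'<\tau$ hinges on this and on the final passage to the limit in the tolerances.
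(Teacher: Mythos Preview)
Your argument is correct, and your necessity direction coincides with the paper's. For sufficiency, however, you follow a genuinely different route. The paper applies Ekeland's variational principle: given $x_0\in\dom(f)\setminus\Argmin{f}$ and $\sigma\in(0,\tau)$, it produces a point $x_\lambda$ with $d(x_\lambda,x_0)\le(f(x_0)-\inff)/\sigma$ at which $f(\cdot)+\sigma d(\cdot,x_\lambda)$ is minimized, and then shows by contradiction---using condition~(\ref{in:wshachar}) to locate a point in a low sublevel set too close to $x_\lambda$---that $x_\lambda\in\Argmin{f}$. Letting $\sigma\uparrow\tau$ recovers the sharp constant. Your construction instead iterates condition~(\ref{in:wshachar}) directly to build a Cauchy sequence descending through the sublevel sets, relying only on completeness and lower semicontinuity; the sharp constant $\tau$ falls out because all steps after the first carry arbitrarily small weight. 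This is more elementary---it avoids invoking Ekeland, though of course Ekeland itself is often proved by a similar iteration---and is closer in spirit to the ``shrinking ball'' approach of Ng and Zheng that the paper explicitly mentions in the remark following the theorem. What the Ekeland route buys is that the contradiction argument isolates exactly where condition~(\ref{in:wshachar}) is used at a single point, which may be conceptually cleaner when one later replaces~(\ref{in:wshachar}) by pointwise slope or subdifferential conditions; your approach, by contrast, makes the role of completeness and the telescoping of errors entirely transparent.
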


\begin{proof}
Suppose first that $f$ does admit global weak sharp minimizers,
having modulus  $\wshar{f}$. Then, since it is
$$
   \varnothing\ne\Argmin{f}\subseteq\lev{\alpha}{f},\quad
   \forall \alpha\in (\inff,+\infty),
$$
consequently, for every $\sigma\in (0,\wshar{f})$ and $ \alpha
\in (\inff,+\infty)$, it holds
$$
   \sigma\cdot\dist{x}{\lev{\alpha}{f}}\le\sigma\cdot
   \dist{x}{\Argmin{f}}\le f(x)-\inff,\quad\forall x\in X,
$$
whence inequality (\ref{in:wshachar}) follows at once.

Conversely, suppose condition (\ref{in:wshachar}) to hold true.
Fix an arbitrary $x_0\in X\backslash\Argmin{f}$. Without loss of generality it is
possible to assume that $x_0\in\dom(f)$. Indeed, in the case $x_0
\not\in\dom(f)$ inequality (\ref{in:defwsharmin}) would be
automatically true (remember that $\dom(f)\ne\varnothing$).
Take an arbitrary $\sigma\in (0,\tau)$. Since it is
$f(x_0)\le\inff+(f(x_0)-\inff)$, then by virtue of the Ekeland variational
principle, corresponding to the value
$$
    \lambda=\frac{f(x_0)-\inff}{\sigma},
$$
there exists $x_\lambda\in X$ with the properties:
\begin{equation}      \label{in:charEVP1}
      f(x_\lambda)\le f(x_0),
\end{equation}
\begin{equation}      \label{in:charEVP2}
    d(x_\lambda,x_0)\le\lambda,
\end{equation}
\begin{equation}      \label{in:charEVP3}
     f(x_\lambda)<f(x)+\sigma d(x,x_\lambda),\quad\forall
     x\in X\backslash\{x_\lambda\}.
\end{equation}
Inequality (\ref{in:charEVP1}) says that $x_\lambda\in\dom(f)$.
Let us show that such an $x_\lambda$ is a global weak sharp
minimizer of $f$. Indeed, assume ab absurdo that $x_\lambda
\not\in\Argmin{f}$, so $\inff<f(x_\lambda)$. Then, condition
(\ref{in:wshachar}) being valid by hypothesis, from the fact that
$$
    \dist{x_\lambda}{\lev{\alpha}{f}}>0,\quad\forall\alpha\in
    (\inff,f(x_\lambda)),
$$
one obtains
$$
    \sigma\cdot\sup_{\alpha\in (\inff,f(x_\lambda))}\dist{x_\lambda}{\lev{\alpha}{f}}
    <\tau\cdot\sup_{\alpha\in (\inff,+\infty)}\dist{x_\lambda}{\lev{\alpha}{f}}
    \le f(x_\lambda)-\inff.
$$
This means that it is possible to find $\epsilon>0$ such that
$$
    \sigma\cdot\sup_{\alpha\in (\inff,f(x_\lambda))}\dist{x_\lambda}{\lev{\alpha}{f}}
    <f(x_\lambda)-\inff-\epsilon.
$$
In particular, as it is $\inff<\inff+\epsilon<f(x_\lambda)$, it holds
$$
    \sigma\cdot\dist{x_\lambda}{\lev{\inff+\epsilon}{f}}<f(x_\lambda)
   -\inff-\epsilon.
$$
From the last inequality it follows that there exists $x_\epsilon\in
\lev{\inff+\epsilon}{f}$ such that
$$
    \sigma d(x_\lambda,x_\epsilon)<f(x_\lambda)-\inff-\epsilon.
$$
Notice that $x_\epsilon\in X\backslash\{x_\lambda\}$, because
$x_\lambda$ does not belong to $\lev{\inff+\epsilon}{f}$.
Thus, by recalling inequality (\ref{in:charEVP3}), one finds
$$
   f(x_\lambda)<f(x_\epsilon)+\sigma d(x_\epsilon,x_\lambda)<
   f(x_\epsilon)+f(x_\lambda)-\inff-\epsilon\le f(x_\lambda),
$$
which clearly leads to an absurdum. This argument hence shows 
that $x_\lambda\in\Argmin{f}\ne\varnothing$. Again, by virtue of
inequality (\ref{in:charEVP2}), by recalling the chosen value of
$\lambda$ one obtains
$$
    \dist{x_0}{\Argmin{f}}\le d(x_0,x_\lambda)\le\frac{f(x_0)-\inff}
    {\sigma},
$$
whence
\begin{equation}    \label{in:wsharestim}
    \sigma\cdot\dist{x_0}{\Argmin{f}}\le  f(x_0)-\inff.
\end{equation}
Since the validity of inequality (\ref{in:wsharestim}) has been proved
for every $\sigma\in (0,\tau)$, one can deduce that
$$
    \tau\cdot\dist{x_0}{\Argmin{f}}\le  f(x_0)-\inff.
$$
By arbitrariness of $x_0$, all requirements of Definition \ref{def:gwsha}
appear now to be fulfilled. Therefore the proof is complete.
\end{proof}

\begin{remark}
It is helpful to mention that, since for every $x\in X$ function
$\alpha\mapsto\dist{x}{\lev{\alpha}{f}}$ is monotone decreasing
on $[\inff,+\infty)$, then condition (\ref{in:wshachar}) can be
equivalently rewritten as
$$
    \tau\cdot\lim_{\alpha\to\inff^+}\dist{x}{\lev{\alpha}{f}}\le
    f(x)-\inff,\quad\forall x\in X.
$$
In fact, Theorem \ref{thm:wsharchalev} has been devised as a
modification of a similar result presented in \cite{NgZhe03},
where metric completeness has been exploited through
the shrinking ball property, instead of through the Ekeland's principle.
\end{remark}

A basic sufficient condition for global weak sharp minimality
can be formulated in terms of strong slope. This is a variational
analysis tool originally proposed in \cite{DeMaTo80} for quite
different purposes, whose utility has been demonstrated in several
circumstances (see, for instance, \cite{AzCoLu02,Ioff00}).
Given a function
$f:X\longrightarrow\R\cup\{\pm\infty\}$ defined on a metric space,
by {\it strong slope} of $f$ at $\bar x\in\dom(f)$ the real-extended
value
$$
   \stsl{f}{\bar x}=\left\{ \begin{array}{ll}
                       0, & \qquad\text{if $\bar x$ is a local minimizer for $f$},  \\
                       \displaystyle\limsup_{x\to\bar x} 
                       \frac{f(\bar x)-f(x)}{d(x,\bar x)}, & 
                       \qquad\text{otherwise,} 
                 \end{array}
   \right.
$$
is meant. In the case $\bar x\not\in\dom(f)$, set $\stsl{f}{\bar x}=+\infty$.

\begin{example}      \label{ex:stsldifconv}
The above recalled tool takes a form more appealing from the
computational viewpoint when $X$ and $f$ possess more structure.
For instance, if $X$ is a normed vector space, then for any
function $f$ Fr\'echet differentiable at $x\in X$, with Fr\'echet
derivative $\FDer f(x)$, it holds
$$
     \stsl{f}{x}=\|\FDer f(x)\|,
$$
(what motivates the notation in use). If, in the same space,
$f$ is convex and subdifferentiable at $x$, it holds
$$
     \stsl{f}{x}=\dist{\nullv^*}{\partial f(x)},
$$
where $\partial f(x)$ denotes the subdifferential of $f$ at $x$
in the sense of convex analysis and $\nullv^*$ stands for the
null vector of the dual space (see Section \ref{Sect2}).
\end{example}

\begin{theorem}     \label{thm:wsharsufcondstsl}
Let $(X,d)$ be a complete metric space and let $f:X\longrightarrow
\R\cup\{+\infty\}$ be l.s.c. on $X$. If there exists $\tau>0$ such that
\begin{equation}    \label{inc:stslwshacon}
    \stsl{f}{X\backslash\Argmin{f}}\subseteq [\tau,+\infty),
\end{equation}
then $f$ admits global weak sharp minimizers and $\wshar{f}\ge\tau$.
\end{theorem}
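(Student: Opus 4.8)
The plan is to mimic the Ekeland-based argument already used in the proof of Theorem \ref{thm:wsharchalev}, but to extract the conclusion $x_\lambda\in\Argmin{f}$ from the strong slope bound rather than from the sublevel-set condition (\ref{in:wshachar}). First I would dispose of the degenerate situation: if $X\backslash\Argmin{f}=\varnothing$, then $\Argmin{f}=X\ne\varnothing$ (as $f$ is proper) and inequality (\ref{in:defwsharmin}) holds trivially with any $\sigma$, so that $\wshar{f}=+\infty\ge\tau$. Likewise, for any $x_0\notin\dom(f)$ the target inequality is automatic. Hence it remains to treat a fixed $x_0\in\dom(f)\backslash\Argmin{f}$, for which $\inff<f(x_0)<+\infty$.

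Next, I would fix an arbitrary $\sigma\in(0,\tau)$ and invoke the Ekeland variational principle exactly as before, with $\lambda=(f(x_0)-\inff)/\sigma$, producing a point $x_\lambda$ satisfying (\ref{in:charEVP1})--(\ref{in:charEVP3}). The crux of the argument --- and the step that departs from Theorem \ref{thm:wsharchalev} --- is to read off a strong slope estimate at $x_\lambda$ from the Ekeland inequality (\ref{in:charEVP3}). Rewriting (\ref{in:charEVP3}) as
$$
    \frac{f(x_\lambda)-f(x)}{d(x,x_\lambda)}<\sigma,\quad\forall x\in X\backslash\{x_\lambda\},
$$
I would pass to the $\limsup$ as $x\to x_\lambda$ to obtain $\stsl{f}{x_\lambda}\le\sigma$; this conclusion holds whether or not $x_\lambda$ is a local minimizer, since in the former case the strong slope is $0$ by definition. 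Because $\sigma<\tau$, the contrapositive of hypothesis (\ref{inc:stslwshacon}) forces $x_\lambda\in\Argmin{f}$; in particular $\Argmin{f}\ne\varnothing$, as required by Definition \ref{def:gwsha}.

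With $x_\lambda$ now known to be a global minimizer, the estimate (\ref{in:charEVP2}) yields
$$
    \dist{x_0}{\Argmin{f}}\le d(x_0,x_\lambda)\le\lambda=\frac{f(x_0)-\inff}{\sigma},
$$
so that $\sigma\cdot\dist{x_0}{\Argmin{f}}\le f(x_0)-\inff$. Since $\sigma\in(0,\tau)$ was arbitrary, letting $\sigma\uparrow\tau$ gives $\tau\cdot\dist{x_0}{\Argmin{f}}\le f(x_0)-\inff$, and the arbitrariness of $x_0$ then establishes global weak sharp minimality with $\wshar{f}\ge\tau$.

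I would expect the only delicate point to be the strong slope extraction: one must verify that the strict inequality (\ref{in:charEVP3}), valid for every $x\ne x_\lambda$, genuinely bounds the $\limsup$ by $\sigma$ (the quotient may be negative for individual $x$, but being uniformly $<\sigma$ it bounds the superior limit), and one must account for the case distinction in the definition of $\stsl{f}{\cdot}$. Everything else is a faithful repetition of the Ekeland machinery from the proof of Theorem \ref{thm:wsharchalev}.
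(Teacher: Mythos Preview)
Your proposal is correct and follows essentially the same argument as the paper: apply Ekeland's variational principle with $\lambda=(f(x_0)-\inff)/\sigma$, read off $\stsl{f}{x_\lambda}\le\sigma<\tau$ from the perturbed minimality inequality, conclude $x_\lambda\in\Argmin{f}$ by the contrapositive of (\ref{inc:stslwshacon}), and finish via the distance estimate $d(x_0,x_\lambda)\le\lambda$. Your explicit treatment of the case distinction in the definition of strong slope (local minimizer versus not) is a minor elaboration of what the paper leaves implicit, but the substance is identical.
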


\begin{proof}
Observe first that if $\Argmin{f}=X$ the thesis becomes trivial.
So, fix arbitrarily $x_0$ and $\sigma$ in $(X\backslash\Argmin{f})
\cap\dom(f)$ and $(0,\tau)$, respectively. In the case $x_0\not\in
\dom(f)$ there is nothing to be proved.
According to the Ekeland variational principle, corresponding to
$\lambda=(f(x_0)-\inff)/\sigma$ there exists $x_\lambda\in\dom(f)$
such that
\begin{equation}    \label{in:EVP2bis}
   d(x_\lambda,x_0)\le\lambda
\end{equation}
and
$$
   f(x_\lambda)<f(x)+\sigma d(x,x_\lambda),\quad\forall x\in X
   \backslash\{x_\lambda\}.
$$
The last inequality implies that for every $\delta>0$ it results in
$$
  \sup_{x\in\ball{x_\lambda}{\delta}\backslash\{x_\lambda\}}
  \frac{f(x_\lambda)-f(x)}{d(x,x_\lambda)}\le
  \sup_{x\in X\backslash\{x_\lambda\}}
  \frac{f(x_\lambda)-f(x)}{d(x,x_\lambda)}\le\sigma,
$$
wherefrom it follows
\begin{equation}     \label{in:smallstsl}
     \stsl{f}{x_\lambda}\le\sigma.
\end{equation}
Now observe that it must be $x_\lambda\in\Argmin{f}$. Otherwise,
if it were $x_\lambda\in X\backslash\Argmin{f}$, inequality
(\ref{in:smallstsl}) would contradict condition (\ref{inc:stslwshacon}).
From inequality (\ref{in:EVP2bis}) one obtains
$$
    \sigma\dist{x_0}{\Argmin{f}}\le f(x_0)-\inff.
$$
The arbitrariness of $x_0$ and $\sigma$ in their respective domains
allows one to complete the proof.
\end{proof}

The next example shows that, in contrast to the previous result,
Theorem \ref{thm:wsharsufcondstsl} is far removed from being
a characterization of global weak sharp minimality.

\begin{example}       \label{ex:contexsufcond}
Let $X=\R$ be equipped with its usual (Euclidean)
metric structure. Consider function $f:\R\longrightarrow\R$
defined by
$$
    f(x)=\left\{\begin{array}{ll}
                       0, & \qquad\text{if}\ x\in (-\infty,0],  \\
                       \displaystyle x+\frac{1}{x+1}, & 
                       \qquad\text{otherwise.} 
                 \end{array}
   \right.
$$
Clearly, $f$ is l.s.c. and bounded from below on $\R$. Here $\inff=0$ and
$\Argmin{f}=(-\infty,0]$. Thus, since it results in
$$
    \dist{x}{\Argmin{f}}=[x]_+\le x+\frac{1}{x+1},\quad\forall x\in
   (0,+\infty),
$$
one deduces that $f$ admits global weak sharp minimizers, with
$\wshar{f}\ge 1$. Nonetheless, being $f$ differentiable on $\R
\backslash\{0\}$, by taking into account Example \ref{ex:stsldifconv},
one finds
$$
   \stsl{f}{x}=1-\frac{1}{(x+1)^2},\quad\forall x\in (0,+\infty),
$$
what makes condition (\ref{inc:stslwshacon}) evidently violated.
\end{example}

\begin{remark}
As, according to the definition of strong slope, it is
$\stsl{f}{\Argmin{f}}=\{0\}$, condition (\ref{inc:stslwshacon})
entails that, if $f$ is not constant, function $|\nabla f|:X
\longrightarrow [0,+\infty]$ can not be u.s.c. (and hence continuous)
at each point $\bar x\in\Argmin{f}$, which is an accumulation point
of $X\backslash\Argmin{f}$. Indeed, for any sequence $(x_n)_{n\in\N}$,
with $x_n\in X\backslash\Argmin{f}$ and $x_n\to\bar x$ as $n\to\infty$,
one finds
$$
   \limsup_{n\to\infty}\stsl{f}{x_n}\ge\liminf_{n\to\infty}\stsl{f}{x_n}
   \ge\tau>0=\stsl{f}{\bar x}.
$$
\end{remark}

Theorem \ref{thm:wsharsufcondstsl} enables one to easily derive
a global error bound for the solution set of an inequality/equality
system. Given $g:X\longrightarrow\R\cup\{\pm\infty\}$ and
$h:X\longrightarrow\R\cup\{\pm\infty\}$, for any $\alpha,\,\beta\in\R$
define
$$
    \Omega_{\alpha,\beta}=\lev{\alpha}{g}\cap h^{-1}(\beta).
$$

\begin{corollary}      \label{cor:stslerbo}
Let $(X,d)$ be a complete metric space, let $g:X\longrightarrow
\R\cup\{\pm\infty\}$ be l.s.c. on $X$, let $h:X\longrightarrow\R$
be continuous on $X$, both not necessarily bounded from below,
and let $\alpha,\,\beta\in\R$. If $\Omega_{\alpha,\beta}\ne\varnothing$
and there exists $\tau>0$ such that
\begin{equation*}  
    \stsl{([g-\alpha]_++|h-\beta|)}{X\backslash\Omega_{\alpha,\beta}}
    \subseteq [\tau,+\infty),
\end{equation*}
then the following error bound holds
$$
    \dist{x}{\Omega_{\alpha,\beta}}\le\tau^{-1} \left([g(x)-\alpha]_+
    +|h(x)-\beta|\right),\quad\forall x\in X.
$$
\end{corollary}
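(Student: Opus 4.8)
The plan is to reduce the statement to Theorem \ref{thm:wsharsufcondstsl} by applying that theorem to the penalty (merit) function
\[
    F=[g-\alpha]_++|h-\beta|,
\]
after checking that $F$ meets all the hypotheses required there and that $\Argmin{F}=\Omega_{\alpha,\beta}$ with $\inf_{x\in X}F(x)=0$. Since $(X,d)$ is assumed complete, the completeness requirement of Theorem \ref{thm:wsharsufcondstsl} is inherited directly.

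First I would verify that $F:X\longrightarrow\R\cup\{+\infty\}$ is l.s.c. on $X$. As $g$ is l.s.c., so is $g-\alpha$, whence $[g-\alpha]_+=\max\{g-\alpha,0\}$, being the pointwise maximum of two l.s.c. functions, is l.s.c. as well; this maximum representation also covers the values $g(x)=\pm\infty$. On the other hand, $h$ being continuous and real-valued, $|h-\beta|$ is continuous, hence l.s.c. Since the sum of an l.s.c. function and a continuous one is l.s.c., $F$ turns out to be l.s.c.

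Next I would observe that both summands defining $F$ are nonnegative, so $F\ge 0$ on $X$ and thus $F$ is bounded from below with $\inf_{x\in X}F(x)\ge 0$. Moreover, $F(x)=0$ holds iff $[g(x)-\alpha]_+=0$ and $|h(x)-\beta|=0$, that is iff $g(x)\le\alpha$ and $h(x)=\beta$, which amounts to $x\in\lev{\alpha}{g}\cap h^{-1}(\beta)=\Omega_{\alpha,\beta}$. Because $\Omega_{\alpha,\beta}\ne\varnothing$ by hypothesis, the value $0$ is attained, so $\inf_{x\in X}F(x)=0$ and $\Argmin{F}=\Omega_{\alpha,\beta}\ne\varnothing$; in particular $F$ is proper.

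With this identification at hand, the strong slope assumption reads exactly as condition (\ref{inc:stslwshacon}) for $F$ with the same constant $\tau$, namely $\stsl{F}{X\backslash\Argmin{F}}\subseteq[\tau,+\infty)$. Theorem \ref{thm:wsharsufcondstsl} then yields that $F$ admits global weak sharp minimizers with $\wshar{F}\ge\tau$, so that by inequality (\ref{in:defwsharmin})
\[
    \tau\cdot\dist{x}{\Argmin{F}}\le F(x)-\inf_{y\in X}F(y),\quad\forall x\in X.
\]
Substituting $\Argmin{F}=\Omega_{\alpha,\beta}$, $\inf_{y\in X}F(y)=0$ and the definition of $F$, then dividing by $\tau$, gives the claimed error bound. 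I do not expect any genuinely delicate step: the only points deserving care are the lower semicontinuity of the penalty term $[g-\alpha]_+$ when $g$ may attain $+\infty$ (settled by the maximum representation) and the identification $\Argmin{F}=\Omega_{\alpha,\beta}$, for which the nonemptiness of $\Omega_{\alpha,\beta}$ is precisely what pins down $\inf_{x\in X}F(x)=0$.
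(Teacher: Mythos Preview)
Your proposal is correct and follows essentially the same approach as the paper: define the penalty function $F=[g-\alpha]_++|h-\beta|$, verify it is l.s.c., bounded below with infimum $0$ and $\Argmin{F}=\Omega_{\alpha,\beta}$, and then invoke Theorem~\ref{thm:wsharsufcondstsl}. Your write-up is in fact more explicit than the paper's in checking the lower semicontinuity and the identification of the argmin set.
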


\begin{proof}
Set $f=[g-\alpha]_++|h-\beta|$. Since by hypothesis
$\Omega_{\alpha,\beta}\ne\varnothing$, then it is $\displaystyle
\inf_{x\in X}f(x)=0$. Such an infimum is actually attained and one has
$\Omega_{\alpha,\beta}=\Argmin{f}$. By definition, under the
assumptions made, function $f$ is l.s.c. and obviously bounded from below.
Thus, one is in a position to apply Theorem \ref{thm:wsharsufcondstsl}.
\end{proof}


\section{Selected elements of Nonsmooth Analysis}    \label{Sect2}

Throughtout the current section, $(\X,\|\cdot\|)$ denotes a real
Banach space. The (topological) dual space of $\X$ is marked
by $\X^*$, with $\X^*$ and $\X$ being paired in duality
by the bilinear form $\langle\,\cdot,\cdot\rangle:\X^*\times\X
\longrightarrow\R$. The null vector of $\X$ is indicated by $\nullv$,
while the null functional by $\nullv^*$.
$\B^*$ denotes the unit ball of $\X^*$, while $\Sfer$
denotes the unit sphere in $\X$. The convention $A+\varnothing=
\varnothing$ is adopted for any subset $A$ of a vector space.
The commutative semigroup of all (Lipschitz) continuous sublinear
functions defined on $\X$ is denoted by $\SLin(\X)$. A starting point for entering
the subsequent dual constructions is the semigroup isomorphism
between $\SLin(\X)$ and the semigroup of all nonempty convex and 
\weakstar compact subsets of $\X^*$, denoted by $\CompConv(\X^*)$.
Such isomorphism is known as {\em Minkowski-H\"ormander duality}
and is represented by the subdifferential map $\partial:\SLin(\X)
\longrightarrow\CompConv(\X^*)$ as follows
$$
    \partial(h)=\partial h(\nullv),\quad\ h\in \SLin(\X).
$$
Clearly, $\partial^{-1}:\CompConv
(\X^*)\longrightarrow\SLin(\X)$ can be defined
via the support function to a given subset, namely
$$
    \partial^{-1}(A)=\suppf{\cdot}{A},\quad A\in\CompConv(\X^*),
$$
where $\displaystyle\suppf{x}{A}=\max_{x^*\in A}\langle x^*,x\rangle$.

\subsection{Quasidifferentiable functions and Demyanov difference}

Given a function $f:\X\longrightarrow\R\cup\{\pm\infty\}$
and $\bar x\in\dom(f)$, let $f'(\bar x;v)$ denote the directional
derivative of $f$ at $\bar x$ in the direction $v\in\X$. According
to \cite{DemRub80}, function $f$ is said to be {\em quasidifferentiable}
(for short, {\em q.d.}) at $\bar x$ if it admits directional
derivative at $\bar x$ in all directions and there exist two
elements $\underline{f},\,\overline{f}\in\SLin(\X)$ such that
$$
   f'(\bar x;v)=\underline{f}(v)-\overline{f}(v),\quad\forall v\in\X.
$$
In the light of the Minkowski-H\"ormander duality this amounts 
to say that the following dual representation is valid
\begin{equation}    \label{eq:defquasidif}
   f'(\bar x;v)=\suppf{v}{\partial\underline{f}(\nullv)}-
      \suppf{v}{\partial\overline{f}(\nullv)}, \quad\forall v\in\X.
\end{equation}
Clearly, the representation in (\ref{eq:defquasidif}) of $f'(\bar x;\cdot)$,
as well as the previous one in terms of $\SLin(\X)$, is by no means unique.
This is not a serious drawback, because every pair of elements of
$\CompConv(\X^*)$ representing $f'(\bar x;\cdot)$ belongs to the same
class with respect to an equivalence relation $\sim$ defined on
$\CompConv(\X^*)\times\CompConv(\X^*)$, according
to which $(A,B)\sim (C,D)$ if $A+D=B+C$. The $\sim$-equivalence
class containing the pair $(\partial\underline{f}(\nullv),-\partial
\overline{f}(\nullv))$ is called {\em quasidifferential}  of $f$ at $\bar x$ and
will be denoted in further constructions by $\qdif f(\bar x)$.
Any pair in the class $\qdif f(\bar x)$ will be henceforth indicated
by $(\qdsub f(\bar x),-\qdsup f(\bar x))$, so
\begin{equation*}
   f'(\bar x;v)=\suppf{v}{\qdsub f(\bar x)}
  -\suppf{v}{-\qdsup f(\bar x)},\quad\forall v\in\X,
   \qquad\hbox{and}\qquad \qdif f(\bar x)=
  [\qdsub f(\bar x),-\qdsup f(\bar x)]_\sim.
\end{equation*}

In the early 80-ies a complete calculus for quasidifferentiable functions
has been developed, which finds a geometric counterpart in the
calculus for $\sim$-equivalence classes of pairs in $\CompConv(\X^*)
\times\CompConv(\X^*)$ (see \cite{DemRub86,DemRub95,Rubi92}). 
This, along with a notable computational tractability of the resulting
constructions, made such approach a recognized and successful
subject within nonsmooth analysis.

The next step towards the setting of analysis tools in use in the
subsequent section requires the introduction of a difference
operation in $\CompConv(\X^*)$. As illustrated in several works
(see, for instance, \cite{DemRub95,Rubi92,RubAkh92,RubVla00}),
such a task can be accomplished following different approaches.
For the purposes of the present investigations, the following notion
seems to be adequate.

\begin{definition}
The inner operation $\Demdif:\CompConv(\X^*)\times
\CompConv(\X^*)\longrightarrow\CompConv(\X^*)$,
defined by
\begin{equation}   \label{def:Demdiff}
   A\Demdif B=\Clasubd(\suppf{\cdot}{A}-\suppf{\cdot}{B})(\nullv),
\end{equation}
where $\Clasubd$ denotes the Clarke subdifferential operator
(see \cite{Clar83,DemRub95}),
is called {\em Demyanov difference} of $A$ and $B$.
\end{definition}

\begin{remark}
Notice that, since function $\suppf{\cdot}{A}-\suppf{\cdot}{B}$ is
Lipschitz continuous, the expression in (\ref{def:Demdiff}) actually
makes sense. In other words, $A\Demdif B$ never gives $\varnothing$.
It is to be mentioned that the definition proposed above is not
the original one, as it was introduced in \cite{Demy80}.
The latter was formulated in the more particular setting of finite
dimensional Euclidean spaces. It is nonetheless relevant
to recall it in detail in as much as this should offer
insights into the potential of such tool. Fixed a point $x\in\R^n$
and an element $A\in\CompConv(\R^n)$, the set
$$
    \Phi_x(A)=\{a\in A:\langle a,x\rangle=\suppf{x}{A}\}
$$
is called {\em max-face} of $A$ generated by $x$. The dual representation
$$
    \suppf{x}{A}=\max_{a\in\partial\suppf{\cdot}{A}(x)}\langle a,x\rangle
$$
shows that $\Phi_x(A)=\partial\suppf{\cdot}{A}(x)$. Now, from the
differentiability theory of convex functions it is well known that
$\partial\suppf{\cdot}{A}(x)$ is a singleton iff function $\suppf{\cdot}{A}$
admits gradient $\GDer\suppf{\cdot}{A}(x)$ at $x$. Observe that
in such a circumstance the max-face $\Phi_x(A)$ reduces to what is
called an exposed point of $A$ generated by the hyperplane
$\langle\cdot\, ,x\rangle$. Then, according to the Rademacher's theorem,
setting
$$
   M_A=\{x\in\R^n: \Phi_x(A) \hbox{ is a singleton}\}
$$
and denoting by $\mu$ the Lebesgue measure on $\R^n$, it results in
$$
    \mu(\R^n\backslash M_A)=0,
$$
i.e. $M_A$ is a Lebesgue full-measure set.
Now recall that, given a locally Lipschitz function $\varphi:
\R^n\longrightarrow\R$, a point  $\bar x\in\R^n$ and a full-measure
set $M$, at the points of which $\varphi$ admits gradient, it holds
$$
   \Clasubd \varphi(\bar x)=\conv \{v\in\R^n:\ \exists (x_n)_{n\in\N},\ 
   x_n\in M,\ x_n\to\bar x,\ v=\lim_{n\to\infty}\GDer\varphi(x_n)\},
$$
where $\conv$ denotes the convex hull of a given set.
Consequently, letting $M_{A,B}$ be a full-measure subset of $\R^n$
where both functions $\suppf{\cdot}{A}$ and $\suppf{\cdot}{B}$ admits
gradient, it is possible to define
$$
    A\Demdif B=\clco \{\GDer\suppf{\cdot}{A}(x)-\GDer\suppf{\cdot}{B}(x):\
    x\in M_{A,B}\},
$$
where $\clco$ indicates convex closure.
This finite dimensional reading of $\Demdif$ allows one for the following
constructive view of $A\Demdif B$: such set turns out to consist of all differences
of points, respectively in $A$ and $B$, which are exposed by the
same hyperplane $\langle\cdot\, ,x\rangle$, with $x$ varying in
$M_{A,B}$.
\end{remark}

The below lemma collects those properties of the Demyanov difference
that will be exploited in the sequel.  Their proofs, as well as additional
material and further discussion on this topic, can be found in
\cite{DemRub95,Rubi92,RubAkh92,RubVla00}.

\begin{lemma}    \label{lem:demdif}
(1) For every $A,\, B,\, C,\, D\in\CompConv(\X^*)$, if
$(A,B)\sim (C,D)$ then $A\Demdif B=C\Demdif D$;

(2) for every $A\in \CompConv(\X^*)$, it holds $A\Demdif A=\{\nullv^*\}$;

(3)  for every $A,\, B\in\CompConv(\X^*)$, if $B\subseteq A$, then
$\nullv^*\in A\Demdif B$;

(4) for every $A,\, B,\, C,\, D\in\CompConv(\X^*)$, the inclusion
$(A+B)\Demdif (C+D)\subseteq (A\Demdif C)+(B\Demdif D)$
holds true;

(5) for every $A\in \CompConv(\X^*)$, it holds $A\Demdif\{\nullv^*\}
=A$;

(6) for every $A,\, B\in\CompConv(\X^*)$, it is $A\Demdif B\subseteq
A-B$.
\end{lemma}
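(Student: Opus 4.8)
The plan is to reduce every assertion to standard facts of Clarke's subdifferential calculus, after the Demyanov difference has been rewritten through the support functions it involves. Write $p_A=\suppf{\cdot}{A}$ for $A\in\CompConv(\X^*)$; each such $p_A$ is sublinear and Lipschitz continuous, hence convex, so that its Clarke subdifferential coincides with the subdifferential of convex analysis, and the Minkowski-H\"ormander duality yields the basic identity $\Clasubd p_A(\nullv)=\partial p_A(\nullv)=A$. The three calculus rules I intend to invoke repeatedly are: the additivity of support functions, $p_{A+B}=p_A+p_B$; the Clarke sum rule $\Clasubd(g_1+g_2)(\nullv)\subseteq\Clasubd g_1(\nullv)+\Clasubd g_2(\nullv)$, valid because support functions of weak${}^*$ compact sets are locally Lipschitz; and the negation rule $\Clasubd(-g)(\nullv)=-\Clasubd g(\nullv)$, which I would derive from the identity $(-g)^\circ(\nullv;v)=g^\circ(\nullv;-v)$ for the Clarke generalized directional derivative.

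With these in hand the purely algebraic items are immediate. For (1) I would observe that $(A,B)\sim(C,D)$ means $A+D=B+C$, whence $p_A-p_B=p_C-p_D$ by support-function additivity; since $A\Demdif B=\Clasubd(p_A-p_B)(\nullv)$ depends only on this difference, the two Demyanov differences coincide. Item (5) is the special case $p_{\{\nullv^*\}}\equiv 0$, so that $A\Demdif\{\nullv^*\}=\Clasubd p_A(\nullv)=A$, while (2) is the case $p_A-p_A\equiv 0$, whose Clarke subdifferential at $\nullv$ is $\{\nullv^*\}$.

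For (3) I would use the generalized Fermat rule: if $B\subseteq A$ then $p_B\le p_A$, so $g=p_A-p_B$ is nonnegative with $g(\nullv)=0$, making $\nullv$ a global minimizer of the locally Lipschitz function $g$; therefore $\nullv^*\in\Clasubd g(\nullv)=A\Demdif B$. Items (4) and (6) are where the calculus rules do the real work. For (4), writing $p_{A+B}-p_{C+D}=(p_A-p_C)+(p_B-p_D)$ and applying the Clarke sum rule at $\nullv$ gives $(A+B)\Demdif(C+D)\subseteq\Clasubd(p_A-p_C)(\nullv)+\Clasubd(p_B-p_D)(\nullv)=(A\Demdif C)+(B\Demdif D)$. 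For (6), the sum and negation rules give $A\Demdif B=\Clasubd(p_A-p_B)(\nullv)\subseteq\Clasubd p_A(\nullv)+\Clasubd(-p_B)(\nullv)=A+(-B)=A-B$.

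The main obstacle is not any single item but the groundwork: I must check that each function to which a Clarke rule is applied is genuinely locally Lipschitz, which here is guaranteed by the weak${}^*$ compactness of the sets, since this bounds the Lipschitz constants of their support functions and also ensures that the sets $A+B$, $-B$, $A-B$ and $\Clasubd(p_A-p_B)(\nullv)$ again belong to $\CompConv(\X^*)$. The one point demanding genuine care is the negation rule used in (6): establishing $\Clasubd(-p_B)(\nullv)=-B$ rests on the directional-derivative identity above, after which $-B$ appears as the Minkowski summand, confirming that the symbol $A-B$ in (6) is to be read as the Minkowski sum $A+(-B)$.
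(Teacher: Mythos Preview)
Your proof is correct. Each of the six items follows cleanly from the Clarke calculus rules you invoke, and the groundwork you identify (local Lipschitzness of support functions of weak${}^*$ compact convex sets, additivity $p_{A+B}=p_A+p_B$, the negation identity $\Clasubd(-g)(\nullv)=-\Clasubd g(\nullv)$) is exactly what is needed.

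As for comparison with the paper: the paper does not actually prove this lemma. It states the six properties and defers to the literature, writing that ``their proofs, as well as additional material and further discussion on this topic, can be found in \cite{DemRub95,Rubi92,RubAkh92,RubVla00}.'' Your argument is therefore strictly more than what the paper supplies. Since the paper's Definition of $A\Demdif B$ is already phrased via the Clarke subdifferential (equation~\eqref{def:Demdiff}), your reduction to the Clarke sum rule, the generalized Fermat rule, and the negation rule is the natural self-contained route and is in the spirit of the cited references; what you gain is that the reader need not consult them.
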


Given a function $f:\X\longrightarrow\R$, suppose that $f$ is q.d.
at each point of $\X$, with $\qdif f(x)=[\qdsub f(x),-\qdsup f(x)]_\sim$.
By combining the quasidifferential pairs of $f$ with Demyanov
difference  the following generalized derivative construction
$\Demcoqd f:\X\longrightarrow\CompConv(\X^*)$, which
will be employed to establish a weak sharp minimality condition,
is obtained:
\begin{eqnarray*}
    \Demcoqd f(x)=\qdsub f(x)\Demdif (-\qdsup f(x)).
\end{eqnarray*}

The use of the above construction is not new in nonsmooth
analsysis (as an example of different employments, see
for instance \cite{Gao00,Uder07}).

\begin{remark}    \label{rem:demdif}
(1) Notice that, by virtue of Lemma \ref{lem:demdif}(1), map $\Demcoqd f$
does not depend on particular representations of  $\qdif f$.
Since, as one immediately checks, one finds $\Demcoqd h(\nullv)
=\partial(h)$ whenever $h\in\SLin(\X)$, construction $\Demcoqd$
can be regarded as an extension of the Minkowski-H\"ormander
duality. 

(2) In force of Lemma \ref{lem:demdif}(3) it is readily seen that,
whenever $\bar x\in\X$ is a local minimizer of $f$, it must be
$\nullv^*\in \Demcoqd f(\bar x)$.

(3) By employing Lemma \ref{lem:demdif}(4) and the well-known
sume rule for quasidifferentials (see, for instance, \cite{DemRub95}),
it is possible to prove
that, given two functions  $f:\X\longrightarrow\R$ and  $g:\X
\longrightarrow\R$, both q.d. at $x\in\X$, the following inclusion holds
$$
     \Demcoqd (f+g)(x)\subseteq \Demcoqd f(x)+\Demcoqd g(x).
$$

(4) By employing the calculus rule for quasidifferentials
of functions by scalars, it is possible to prove that, given a function $f:
\X\longrightarrow\R$ q.d. at $x\in\X$ and  a scalar $\alpha\ge 0$,
the following equality holds
$$
     \Demcoqd [\alpha f](x)\subseteq \alpha\Demcoqd f(x).
$$

(5) From Lemma \ref{lem:demdif}(6) it is evident that, if a
function $f:\X\longrightarrow\R$ is q.d. at $x$, with $\qdif f(x)=
[\qdsub f(x),-\qdsup f(x)]_\sim$, then it is always $\Demcoqd
f(x)\subseteq\qdsub f(x)+\qdsup f(x)$. Immediate examples
show that this inclusion can be strict. Notice that the right side
term of it depends on the pair in $\CompConv(\X^*)\times
\CompConv(\X^*)$ representing $\qdif f(x)$.
\end{remark}

\begin{example}    \label{ex:convf}
Let $f:\X\longrightarrow\R$ be a continuous convex function.
Since in this case $f$ is q.d. at each point $x\in\X$ with
$\qdif f(x)=[\partial f(x),\{\nullv^*\}]_\sim$, for such kind of
function owing to Lemma \ref{lem:demdif}(5) one obtains
$$
    \Demcoqd f(x)=\partial f(x)\Demdif\{\nullv^*\}=\partial
    f(x).
$$
When, in particular, $f$ happens to be G\^ateaux differentiable
at $x$, with G\^ateaux derivative $\GDer f(x)$, one has $\Demcoqd
f(x)=\{\GDer f(x)\}$. Therefore, if $\X$ is a separable Banach space,
map $\Demcoqd f$ is single valued on a $G_\delta$ dense subset
of $\X$, on account of Mazur theorem.
\end{example}

In view of the employment of construction $\Demcoqd$ in the analysis
of global weak sharp minimality through the condition established
in Theorem \ref{thm:wsharsufcondstsl}, the next estimate, already
obtained in \cite{Uder07} within a more general argument, is needed.

\begin{lemma}      \label{lem:stslDemcoqdestim}
Given a function $f:\X\longrightarrow\R\cup\{\pm\infty\}$ defined
on a normed vector space $(\X,\|\cdot\|)$, suppose that $f$ is q.d.
at $\bar x\in\dom(f)$. Then, the following inequality holds
\begin{equation}     \label{in:stslDemcoqdest}
     \stsl{f}{\bar x}\ge\dist{\nullv^*}{\Demcoqd f(\bar x)}.
\end{equation}
\end{lemma}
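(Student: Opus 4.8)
The plan is to bound the strong slope from below by one-sided directional descent rates along rays, and then to convert the resulting quantity into the distance from $\nullv^*$ to $\Demcoqd f(\bar x)$ by a separation argument. If $\bar x$ is a local minimizer of $f$, then by Remark \ref{rem:demdif}(2) one has $\nullv^*\in\Demcoqd f(\bar x)$, so the right-hand side of (\ref{in:stslDemcoqdest}) vanishes and the inequality is trivial. Thus I would assume henceforth that $\bar x$ is not a local minimizer, so that $\stsl{f}{\bar x}=\limsup_{x\to\bar x}\frac{f(\bar x)-f(x)}{\|x-\bar x\|}$.

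First I would restrict this $\limsup$ to rays emanating from $\bar x$. For a fixed $v\in\Sfer$ and $x=\bar x+tv$ with $t\downarrow 0$ one has $\|x-\bar x\|=t$ and, since $f$ is q.d. and hence directionally differentiable at $\bar x$, the ratio $\frac{f(\bar x)-f(\bar x+tv)}{t}$ tends to $-f'(\bar x;v)$. As the strong slope is a $\limsup$ over all $x\to\bar x$, it dominates the limit along each such ray, and running $v$ over $\Sfer$ gives
$$
   \stsl{f}{\bar x}\ge\sup_{v\in\Sfer}\bigl(-f'(\bar x;v)\bigr).
$$

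Next I would relate $f'(\bar x;\cdot)$ to the support function of $\Demcoqd f(\bar x)$. Write $g=f'(\bar x;\cdot)=\suppf{\cdot}{\qdsub f(\bar x)}-\suppf{\cdot}{-\qdsup f(\bar x)}$; this is a difference of two continuous sublinear functions, hence positively homogeneous, Lipschitz, and vanishing at $\nullv$, and by (\ref{def:Demdiff}) one has $\Demcoqd f(\bar x)=\Clasubd g(\nullv)$. Positive homogeneity forces the ordinary one-sided directional derivative of $g$ at $\nullv$ in the direction $v$ to equal $g(v)$ itself; since the Clarke generalized directional derivative $g^\circ(\nullv;\cdot)$ dominates the ordinary one wherever the latter exists and is exactly the support function of the Clarke subdifferential, it follows that $g(v)\le g^\circ(\nullv;v)=\suppf{v}{\Demcoqd f(\bar x)}$. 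Consequently $-f'(\bar x;v)\ge-\suppf{v}{\Demcoqd f(\bar x)}=\min_{x^*\in\Demcoqd f(\bar x)}\langle x^*,-v\rangle$, and, since $\Sfer$ is symmetric,
$$
   \stsl{f}{\bar x}\ge\sup_{w\in\Sfer}\ \min_{x^*\in\Demcoqd f(\bar x)}\langle x^*,w\rangle.
$$

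It then remains to show that the last supremum is at least $\rho:=\dist{\nullv^*}{\Demcoqd f(\bar x)}$, and this minimax-type identity is the crux of the argument. The case $\rho=0$ is trivial, so I would assume $\rho>0$ and, for an arbitrary $r\in(0,\rho)$, strictly separate the weak${}^*$ compact convex set $\Demcoqd f(\bar x)$ from the closed ball $r\B^*$. The latter is weak${}^*$ compact by the Banach--Alaoglu theorem, so these two disjoint weak${}^*$ compact convex sets admit a strict separation by a weak${}^*$ continuous functional, that is, by some $w\in\X$, $w\ne\nullv$. Computing the support function of $r\B^*$ and normalizing $w$ then yields $\min_{x^*\in\Demcoqd f(\bar x)}\langle x^*,w/\|w\|\rangle\ge r$, whence the supremum exceeds $r$; letting $r\uparrow\rho$ concludes. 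The delicate point throughout is precisely this last step: one must produce a separating element of the \emph{predual} $\X$ rather than merely of $\X^{**}$, and it is exactly Banach--Alaoglu together with the weak${}^*$ compactness of $\Demcoqd f(\bar x)$ — and not any Asplund or reflexivity hypothesis — that makes this possible.
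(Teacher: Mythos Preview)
Your argument is correct but follows a genuinely different route from the paper's. The paper proceeds by a short perturbation trick: from the definition of the strong slope it observes that $\bar x$ is a local minimizer of $f(\cdot)+(\stsl{f}{\bar x}+\epsilon)\|\cdot-\bar x\|$, then applies the Fermat rule (Remark \ref{rem:demdif}(2)) together with the sum and scalar rules for $\Demcoqd$ (Remark \ref{rem:demdif}(3),(4)) to obtain
\[
\nullv^*\in\Demcoqd f(\bar x)+(\stsl{f}{\bar x}+\epsilon)\B^*,
\]
from which the estimate $\dist{\nullv^*}{\Demcoqd f(\bar x)}\le\stsl{f}{\bar x}+\epsilon$ is immediate. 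No explicit separation is needed; the calculus of $\Demcoqd$ absorbs that work.

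Your approach instead reads the slope as a supremum of negative directional derivatives, bounds $f'(\bar x;\cdot)$ above by the support function of $\Demcoqd f(\bar x)$ via $g^\circ(\nullv;\cdot)\ge g(\cdot)$, and then invokes weak${}^*$ separation to convert the resulting sup--min expression into the dual distance. This avoids the sum rule for $\Demcoqd$ and is closer to first principles, at the cost of the additional Hahn--Banach/Banach--Alaoglu step at the end. Both proofs exploit that $\Demcoqd f(\bar x)\in\CompConv(\X^*)$; the paper uses it implicitly through the calculus, while you use it explicitly in the separation.
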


\begin{proof}
If $\bar x$ happens to be a local minimizer of $f$ then, as stated in Remark
\ref{rem:demdif}(2), one has $\nullv^*\in\Demcoqd f(\bar x)$.
In such event inequality (\ref{in:stslDemcoqdest}) is trivially satisfied.
Otherwise, fix an arbitrary $\epsilon>0$. Corresponding to it,
by definition of strong slope, there exists $\delta_\epsilon>0$ such that
$$
     \sup_{x\in\ball{\bar x}{\delta_\epsilon}\backslash\{\bar x\}}
     \frac{f(\bar x)-f(x)}{d(x,\bar x)}<\stsl{f}{\bar x}+\epsilon.
$$
This means that $\bar x$ is a local minimizer of function $f(\cdot)
+(\stsl{f}{\bar x}+\epsilon)\|\cdot-\bar x\|$. By applying again what
has been noted in Remark \ref{rem:demdif}(2), (3) and (4), one obtains
\begin{eqnarray*}
  \nullv^* &\in &\Demcoqd [f(\cdot)+(\stsl{f}{\bar x}+\epsilon)\|\cdot-\bar x\|]
  (\bar x)\subseteq \Demcoqd f(\bar x)+(\stsl{f}{\bar x}+\epsilon)
  \Demcoqd \|\cdot-\bar x\|(\bar x) \\
  &=& \Demcoqd f(\bar x)+(\stsl{f}{\bar x}+\epsilon)\B^*.
\end{eqnarray*}
Such an inclusion implies the existence of $v^*\in\Demcoqd f(\bar x)$
such that $\|v^*\|\le\stsl{f}{\bar x}+\epsilon$.  This allows one to deduce that
$$
   \dist{\nullv^*}{\Demcoqd f(\bar x)}\le\stsl{f}{\bar x}+\epsilon,
$$
whence the thesis follows by arbitrariness of $\epsilon$.
\end{proof}


\subsection{Lower exhausters of generalized derivatives}

Let $h:\X\longrightarrow\R\cup\{\pm\infty\}$ be a positively
homogeneous of degree one (henceforth, for short,
p.h.) function. The need of extending the Minkowski-H\"ormander
duality in such a way to dually represent classes of p.h. functions,
which are broader than $\SLin(\X)$, led to introduce the notion of
lower and upper families of exhausters (see \cite{Demy99}).
Accordingly, a family $\Exhaust(h)\subseteq\CompConv(\X^*)$
is said to be a {\em lower exhauster} of $h$ if
$$
    h(x)=\inf_{E\in\Exhaust(h)}\ \max_{x^*\in E}\langle x^*,
    x\rangle=\inf_{E\in\Exhaust(h)}\ \suppf{x}{E}, \quad\forall
    x\in\X.
$$
In nonsmooth optimization the p.h. functions to be dually represented
are often generalized derivatives. Among them, in the investigations
here exposed, the Hadamard directional derivatives will be employed.
Given a function $f:\X\longrightarrow\R\cup\{\pm\infty\}$ and
$\bar x\in\dom(f)$, denote by
$$
    \Hadlowder{f}(\bar x;v)=\liminf_{\scriptstyle v'\to v\atop
    \scriptstyle t\to 0^+}\frac{f(\bar x+tv')-f(\bar x)}{t}
    \quad\hbox{and}\quad
   \Hadupder{f}(\bar x;v)=\limsup_{\scriptstyle v'\to v\atop
    \scriptstyle t\to 0^+}\frac{f(\bar x+tv')-f(\bar x)}{t}
$$
respectively the {\em Hadamard lower derivative} and the {\em 
Hadamard upper derivative} of $f$ at $\bar x$, in the direction
$v\in\X$.
Notice that, if $f$ is directionally differentiable at $\bar x$
in all directions and it is locally Lipschitz around the same point,
then for every $v\in\X$ it is $\Hadlowder{f}(\bar x;v)= \Hadupder{f}
(\bar x;v)=f'(\bar x;v)$. In the subsequent section, when dealing
with a function $f$, whose Hadamard lower derivative at $\bar x$
admits lower exhauster, the shortened notation
$$
    \Exhaustder{f}{\bar x}=\Exhaust(\Hadlowder{f}(\bar x;\cdot)).
$$
will be used. In order to formulate a nondegeneracy condition
involving lower exhausters of Hadamard lower derivatives, it
is useful to define the `norm' of a family $\Exhaust(h)$ of elements
in $\CompConv(\X^*)$ as
$$
    \|\Exhaust(h)\|=\sup_{E\in\Exhaust(h)}\ \dist{\nullv^*}{E}.
$$


\section{Sufficient conditions in Banach spaces}     \label{Sect3}

\subsection{Weak sharp minimality conditions for q.d. extremum problems}

\begin{theorem}     \label{thm:qdwsha}
Let $(\X,\|\cdot\|)$ be a Banach space and let $f:\X\longrightarrow\R$
be a function l.s.c. on $\X$. Suppose that $f$ is q.d. at each point
of $\X$ and
\begin{equation}     \label{in:nodegconDemcoqd}
   \dist{\nullv^*}{\Demcoqd f(\X\backslash\Argmin{f})}=\tau>0.
\end{equation}
Then, $f$ admits global weak sharp minimizers and $\wshar{f}\ge\tau$.
\end{theorem}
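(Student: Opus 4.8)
The plan is to deduce the statement directly from Theorem \ref{thm:wsharsufcondstsl}, controlling the strong slope of $f$ on $\X\backslash\Argmin{f}$ by means of the estimate in Lemma \ref{lem:stslDemcoqdestim}. First I would unravel the nondegeneracy condition (\ref{in:nodegconDemcoqd}). By its very definition the set $\Demcoqd f(\X\backslash\Argmin{f})$ is the union $\bigcup_{x\in\X\backslash\Argmin{f}}\Demcoqd f(x)$, and since the distance of a point from a union of sets equals the infimum of its distances from the members, hypothesis (\ref{in:nodegconDemcoqd}) is equivalent to
$$
    \inf_{x\in\X\backslash\Argmin{f}}\dist{\nullv^*}{\Demcoqd f(x)}=\tau .
$$
In particular, $\dist{\nullv^*}{\Demcoqd f(x)}\ge\tau$ holds for every individual $x\in\X\backslash\Argmin{f}$.

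Next I would observe that, $f$ being real-valued, one has $\dom(f)=\X$, so that $f$ is q.d. at every point of $\X$ and each such point lies in $\dom(f)$. Consequently Lemma \ref{lem:stslDemcoqdestim} applies at every $x\in\X$ and yields $\stsl{f}{x}\ge\dist{\nullv^*}{\Demcoqd f(x)}$. Combining this with the pointwise lower bound obtained in the previous step gives $\stsl{f}{x}\ge\tau$ for each $x\in\X\backslash\Argmin{f}$, which is precisely the inclusion
$$
    \stsl{f}{\X\backslash\Argmin{f}}\subseteq[\tau,+\infty) .
$$

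Finally, recalling that $\X$ is a complete metric space (being a Banach space), that $f$ is l.s.c. and, by the standing assumption of the paper, bounded from below, I would be in a position to invoke Theorem \ref{thm:wsharsufcondstsl} with this very $\tau$. Its conclusion delivers that $f$ admits global weak sharp minimizers with $\wshar{f}\ge\tau$, as required. I do not anticipate a genuine obstacle here: the argument is a clean composition of the two earlier results, and the only points demanding a little care are the set-theoretic reading of the distance in (\ref{in:nodegconDemcoqd}) as an infimum over the members of the union, and the remark that real-valuedness guarantees $\dom(f)=\X$, so that the strong-slope estimate of Lemma \ref{lem:stslDemcoqdestim} is genuinely available at every point of $\X\backslash\Argmin{f}$.
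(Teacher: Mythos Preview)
Your proposal is correct and follows essentially the same route as the paper: both apply Lemma \ref{lem:stslDemcoqdestim} pointwise on $\X\backslash\Argmin{f}$ to convert hypothesis (\ref{in:nodegconDemcoqd}) into the strong-slope condition (\ref{inc:stslwshacon}), and then invoke Theorem \ref{thm:wsharsufcondstsl}. Your write-up is slightly more explicit in unpacking $\dist{\nullv^*}{\Demcoqd f(\X\backslash\Argmin{f})}$ as an infimum over the union, while the paper additionally remarks that $\X\backslash\Argmin{f}$ is open, but neither addition changes the substance of the argument.
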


\begin{proof}
Since $f$ is l.s.c. on $\X$, set $\X\backslash\Argmin{f}$ is open.
By exploiting the estimate established in Lemma \ref{lem:stslDemcoqdestim},
one obtains in force of condition (\ref{in:nodegconDemcoqd})
$$
   \inf_{x\in\X\backslash\Argmin{f}}\stsl{f}{x}\ge
   \inf_{x\in\X\backslash\Argmin{f}}\dist{\nullv^*}{\Demcoqd f(x)}
   =\tau>0.
$$
Thus the nondegeneracy condition (\ref{inc:stslwshacon}) of Theorem
\ref{thm:wsharsufcondstsl} appears to be fulfilled. This allows
one to achieve both the assertions in the thesis.
\end{proof}

Theorem \ref{thm:qdwsha} extends to the broader class of q.d.
functions a well-known sufficient condition for the global weak sharp
minimality of convex functions (see, for instance, \cite{Zali01}),
which is stated below.

\begin{corollary}      \label{cor:convwsha}
Let $(\X,\|\cdot\|)$ be a Banach space and let $f:\X\longrightarrow
\R\cup\{+\infty\}$ be a function l.s.c. and convex on $\X$. If
\begin{equation}     
   \dist{\nullv^*}{\partial f(\X\backslash\Argmin{f})}=\tau>0,
\end{equation}
then $f$ admits global weak sharp minimizers and $\wshar{f}\ge\tau$.
\end{corollary}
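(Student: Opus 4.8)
The plan is to deduce the corollary from the results already at hand, exploiting that for convex functions the strong slope is dually computable. The quickest route goes through Theorem~\ref{thm:qdwsha}: when $f$ is finite-valued it is continuous and convex, hence q.d.\ at every point of $\X$ with $\Demcoqd f(x)=\partial f(x)$ by Example~\ref{ex:convf}. Then the hypothesis $\dist{\nullv^*}{\partial f(\X\backslash\Argmin{f})}=\tau>0$ is literally the nondegeneracy condition (\ref{in:nodegconDemcoqd}), and the conclusion $\wshar{f}\ge\tau$ follows immediately. The only caveat is that Theorem~\ref{thm:qdwsha} is stated for real-valued $f$, whereas here $f$ is allowed to take the value $+\infty$; so to cover the full generality of the statement I would instead argue directly through the strong slope and Theorem~\ref{thm:wsharsufcondstsl}.

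Accordingly, the substance of the proof is to verify the inclusion (\ref{inc:stslwshacon}), namely $\stsl{f}{\X\backslash\Argmin{f}}\subseteq[\tau,+\infty)$; once this is in place, Theorem~\ref{thm:wsharsufcondstsl} yields both assertions at a stroke. I would fix an arbitrary $x\in\X\backslash\Argmin{f}$ and distinguish three cases. If $x\notin\dom(f)$, then $\stsl{f}{x}=+\infty\ge\tau$ by convention. If $x\in\dom(f)$ and $f$ is subdifferentiable at $x$, then Example~\ref{ex:stsldifconv} gives $\stsl{f}{x}=\dist{\nullv^*}{\partial f(x)}$; since $\partial f(x)\subseteq\partial f(\X\backslash\Argmin{f})$, the distance to the smaller set can only be larger, so $\dist{\nullv^*}{\partial f(x)}\ge\dist{\nullv^*}{\partial f(\X\backslash\Argmin{f})}=\tau$. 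The remaining case is $x\in\dom(f)$ with $\partial f(x)=\varnothing$.

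The main obstacle is precisely this last case. Here I would invoke the convex-analytic fact that a proper convex function fails to be subdifferentiable at a point of its domain only when some one-sided directional derivative $f'(x;v)$ equals $-\infty$; along the corresponding ray $x+tv$ the difference quotient $(f(x)-f(x+tv))/(t\|v\|)$ then tends to $+\infty$, so that $\stsl{f}{x}=+\infty\ge\tau$ (and such an $x$ cannot be a local minimizer, so the \emph{limsup} branch of the strong slope indeed applies). This step is vacuous in finite dimensions and, more generally, whenever $f$ is continuous, but it is the delicate point needed to reconcile the extended-real-valued setting of the corollary with the finite-valued hypotheses of Theorem~\ref{thm:qdwsha}; it also explains why routing the argument through Theorem~\ref{thm:wsharsufcondstsl} rather than Theorem~\ref{thm:qdwsha} is the safer strategy. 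Having covered all three cases, (\ref{inc:stslwshacon}) holds and the proof concludes by Theorem~\ref{thm:wsharsufcondstsl}.
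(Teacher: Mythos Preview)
Your proposal is correct and mirrors the paper's own two-step argument: the finite-valued (hence continuous) case is dispatched via Example~\ref{ex:convf} and Theorem~\ref{thm:qdwsha}, while the general extended-valued case is handled by verifying the strong-slope condition~(\ref{inc:stslwshacon}) pointwise and invoking Theorem~\ref{thm:wsharsufcondstsl}. The only cosmetic difference lies in the treatment of points $x\in\dom(f)$ with $\partial f(x)=\varnothing$: the paper cites Br{\o}ndsted--Rockafellar and the convention $\dist{\nullv^*}{\varnothing}=+\infty$, whereas you argue through the directional derivative---just note that the precise fact is $\inf_{\|v\|=1}f'(x;v)=-\infty$ (not necessarily $f'(x;v)=-\infty$ for one specific $v$), which still forces $\stsl{f}{x}=+\infty$.
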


\begin{proof}
If $f$ is continuous on $\X$ it is enough to observe that, as
remarked in Example \ref{ex:convf}, under the current
assumptions it is $\Demcoqd f=\partial f$, and then to apply Theorem
\ref{thm:qdwsha}.

More in general, the Br\o ndsted-Rockafellar theorem is known to
ensure the nonemptiness of the set valued map $\partial f$ norm
densely in $\dom(f)$. In the light of Example \ref{ex:stsldifconv},
at each subdifferentiable point it is $\stsl{f}{x}=\dist{\nullv^*}{\partial f(x)}$.
In the case $x\not\in\dom(\partial f)$, by convention it is $\dist{\nullv^*}
{\partial f(x)}=+\infty$. Thus, it is possible to apply directly Theorem
\ref{thm:wsharsufcondstsl}, condition (\ref{inc:stslwshacon})
being fufilled.
\end{proof}

Theorem \ref{thm:qdwsha} is not the only extension of the sufficient condition
valid for the convex case. In fact, in \cite{NgZhe03} a similar result was
presented, which relies on a nondegeneracy condition involving
Fr\'echet subdifferentials. In contrast to Theorem \ref{thm:qdwsha},
such result requires an Asplundity assumption on the underlying
Banach space. On the other hand, in the former condition a generalized
differentiability  assumptions is made on $f$, while this can be
avoided in \cite{NgZhe03}, because nonemptiness of Fr\'echet
subdifferentials is guaranteed norm densely by virtue of the Asplund property
(see \cite{Mord06}). Thus these two conditions can not be obtained
from the other each. Notice that the key step in the proof presented
in \cite{NgZhe03} is the use of a fuzzy sum rule for Fr\'echet
subdifferential valid in force of the Fr\'echet trustworthiness
of the underlying space, the latter property being an equivalent
manifestation of the Asplund one. All of this is not needed in the
approach here exposed, since $\Demcoqd$ inherits from Clarke
subdifferential calculus the sum rule adequate to the present
circumstance (remember Remark \ref{rem:demdif}(3)).

Since in context of q.d. functions condition (\ref{in:nodegconDemcoqd})
implies the nondegeneracy condition (\ref{inc:stslwshacon}), the
former can not be expected to be a characterization of global
weak sharp minimality (remember Example \ref{ex:contexsufcond}).
Nevertheless, an interesting consequence of condition
(\ref{in:nodegconDemcoqd}), which is worth noting here, is that,
in the presence of an additional semicontinuity assumption
on the map $\Demcoqd f:\X\longrightarrow\CompConv(\X^*)$,
it prevents $f$ to be smooth at any point of 
$\front\Argmin{f}$, that is the boundary of $\Argmin{f}$,
as proved below.

\begin{proposition}     \label{pro:wsharnondif}
Let $(\X,\|\cdot\|)$ be a Banach space and let $f:\X\longrightarrow\R$
be a l.s.c. function, which is q.d. at each point of $\X$. Suppose that

(i) condition (\ref{in:nodegconDemcoqd}) holds for some $\tau>0$;

(ii) map $\Demcoqd f:\X\longrightarrow\CompConv(\X^*)$ is
norm-to-norm u.s.c. at each point of $\Argmin{f}$.

\noindent Then, $f$ fails to be G\^ateaux differentiable at each
point of $\front\Argmin{f}$.
\end{proposition}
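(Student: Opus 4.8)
The plan is to proceed by contradiction. First I would fix an arbitrary $\bar x\in\front\Argmin{f}$. Since $f$ is l.s.c. on $\X$, the set $\Argmin{f}=\lev{\inff}{f}$ is closed, so its boundary is contained in it and $\bar x\in\Argmin{f}$; being a boundary point, $\bar x$ also lies in $\cl(\X\backslash\Argmin{f})$, hence every neighborhood of $\bar x$ meets $\X\backslash\Argmin{f}$. Assume, contrary to the assertion, that $f$ is G\^ateaux differentiable at $\bar x$, with derivative $\GDer f(\bar x)\in\X^*$.

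The key step is to identify $\Demcoqd f(\bar x)$ explicitly. As $\bar x$ is a global minimizer of $f$, for every $v\in\X$ one has $f'(\bar x;v)\ge 0$; but G\^ateaux differentiability gives $f'(\bar x;v)=\langle\GDer f(\bar x),v\rangle$, and applying this to both $v$ and $-v$ forces $\GDer f(\bar x)=\nullv^*$. Moreover, since $f'(\bar x;\cdot)$ is now linear, the pair $(\{\GDer f(\bar x)\},\{\nullv^*\})$ is an admissible quasidifferential representation, because $\suppf{v}{\{\GDer f(\bar x)\}}-\suppf{v}{\{\nullv^*\}}=\langle\GDer f(\bar x),v\rangle=f'(\bar x;v)$. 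By Lemma \ref{lem:demdif}(1) the set $\Demcoqd f(\bar x)$ does not depend on the chosen representation, so I would compute it from this pair; Lemma \ref{lem:demdif}(5) then yields $\Demcoqd f(\bar x)=\{\GDer f(\bar x)\}\Demdif\{\nullv^*\}=\{\GDer f(\bar x)\}=\{\nullv^*\}$.

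Finally I would invoke hypothesis (ii). Since $\Demcoqd f(\bar x)=\{\nullv^*\}$ is contained in the open ball $V=\{x^*\in\X^*:\ \|x^*\|<\tau/2\}$, the norm-to-norm upper semicontinuity of $\Demcoqd f$ at $\bar x$ provides a neighborhood $U$ of $\bar x$ with $\Demcoqd f(x)\subseteq V$ for every $x\in U$. Because $\bar x\in\cl(\X\backslash\Argmin{f})$, I may select $\tilde x\in U\cap(\X\backslash\Argmin{f})$; as the Demyanov difference is never empty, $\Demcoqd f(\tilde x)$ contains some $v^*$, and $v^*\in V$ gives $\|v^*\|<\tau/2$. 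On the other hand, $\tilde x\in\X\backslash\Argmin{f}$ together with condition (\ref{in:nodegconDemcoqd}) forces $\dist{\nullv^*}{\Demcoqd f(\tilde x)}\ge\tau$, whence $\|v^*\|\ge\tau$. This contradiction shows that $f$ cannot be G\^ateaux differentiable at $\bar x$.

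I expect the middle paragraph to be the main obstacle: one must pin down $\Demcoqd f(\bar x)$ exactly, despite the non-uniqueness of the quasidifferential pairs representing $f'(\bar x;\cdot)$. This is precisely what the representation-independence in Lemma \ref{lem:demdif}(1) and the reduction rule $A\Demdif\{\nullv^*\}=A$ of Lemma \ref{lem:demdif}(5) are used for; the remaining topological argument combining upper semicontinuity with condition (\ref{in:nodegconDemcoqd}) is then routine.
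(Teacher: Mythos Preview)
Your proof is correct and follows essentially the same line as the paper's: assume G\^ateaux differentiability at a boundary point $\bar x\in\Argmin{f}$, deduce $\Demcoqd f(\bar x)=\{\nullv^*\}$, and then combine upper semicontinuity of $\Demcoqd f$ with the nondegeneracy condition to reach a contradiction at a nearby point of $\X\backslash\Argmin{f}$. The only cosmetic difference is that the paper cites Example~\ref{ex:convf} and the Fermat rule for the identification $\Demcoqd f(\bar x)=\{\nullv^*\}$, whereas you spell it out directly via Lemma~\ref{lem:demdif}(1) and (5); and the paper phrases upper semicontinuity through the $\epsilon$--$\delta$ enlargement $\Demcoqd f(\ball{\bar x}{\delta_\tau})\subseteq(\tau/2)\B^*$, while you use the open-set formulation, which coincide here since the value at $\bar x$ is a singleton.
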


\begin{proof}
According to Theorem \ref{thm:qdwsha}, it is $\Argmin{f}\ne\varnothing$.
If $\Argmin{f}\ne\X$ (that is $f$ is not constant), since $\Argmin{f}$
is closed there exists $\bar x\in\front\Argmin{f}\subseteq\Argmin{f}$.
Assume ab absurdo that $f$ is  G\^ateaux differentiable at $\bar x$.
As remarked in Example \ref{ex:convf}, in such event it is $\Demcoqd
f(\bar x)=\{\nabla f(\bar x)\}$. As it is $\bar x\in\Argmin{f}$, the well-known Fermat rule
implies $\Demcoqd f(\bar x)=\{\nullv^*\}$. By hypothesis (ii), corresponding
to $\tau/2$  there exists $\delta_\tau>0$ such that
$$
   \Demcoqd f(\ball{\bar x}{\delta_\tau})\subseteq\ball{\Demcoqd
   f(\bar x)}{\frac{\tau}{2}}=\frac{\tau}{2}\B^*,
$$
where clearly $\ball{A}{r}=\lev{r}{\dist{\cdot}{A}}$.
Since $\ball{\bar x}{\delta_\tau}\cap (\X\backslash\Argmin{f})\ne\varnothing$,
the last inclusion contradicts condition (\ref{in:nodegconDemcoqd}),
which is in force by (i). This completes the proof.
\end{proof}

Even though, as remarked above, condition  (\ref{in:nodegconDemcoqd})
is only sufficient for global weak sharp minimality, Proposition
\ref{pro:wsharnondif} shows that for a certain class of functions
global weak sharp minimality is incompatible with differentiability.
To quote a metaphor due to V.F. Demyanov (see \cite{Demy12}),
on the account of the nice properties enjoyed by weak sharp
minimality, it is possible to assert that `ugly ducklings' appear
in this circumstance to play the role of `beautiful swans'. 

\vskip0.5cm

Let us pass now to the study of conditions for global weak sharp
minimality in the context of constrained extremum problems.
Given functions $f:\X\longrightarrow\R$, $g:\X\longrightarrow\R$ and 
$h:\X\longrightarrow\R$, here optimization problems of the form
$$
     \min_{x\in\X} f(x) \hbox{ subject to }   g(x)\le 0,\ h(x)=0
    \leqno(\mathcal{P}_c)
$$
will be considered. For convenience-sake the feasible region of
$(\mathcal{P}_c)$ is denoted by $\Omega$, i.e. $\Omega=\lev{0}{g}\cap h^{-1}(0)$,
whereas the set of all its global solutions is indicated by $\Argmin{f,\Omega}$.
If all data of $(\mathcal{P}_c)$ are functions, which are q.d. at
each point of $\X$, while $f$ and $g$ are l.s.c., and $h$ continuous on $\X$,
then problem $(\mathcal{P}_c)$ will be referred
to as a {\em q.d. problem}. Note that, whenever $(\mathcal{P}_c)$ is a q.d.
problem, its feasible region is an example of what is called a q.d. set,
according to \cite{DemRub95}.

In the context of constrained extremum problems the notion of
global weak sharp minimality is modified as follows: a problem
 $(\mathcal{P}_c)$ is said to admit {\em constrained global weak
sharp minimizers} if $\Argmin{f,\Omega}\ne\varnothing$ and there
exists $\sigma>0$ such that
$$
    \sigma\cdot\dist{x}{\Argmin{f,\Omega}}\le f(x)-\inf_{x\in\Omega}f(x),
    \quad\forall x\in\Omega.
$$
The supremum over all values $\sigma$ satisfying the above inequality
will be called {\em modulus of constrained global weak sharpness}
of $(\mathcal{P}_c)$ and denoted by $\wshar{f,\Omega}$.

The next result provide a sufficient condition for global constrained
minimizers to be weak sharp.

\begin{theorem}     \label{thm:qdconstwsha}
With reference to a q.d. problem $(\mathcal{P}_c)$, suppose that

$(i)$ $\Argmin{f,\Omega}\ne\varnothing$;

$(ii)$ there exists $\tau>0$ such that
\begin{equation}       \label{in:hypot2}
   \inf_ {x\in\X\backslash\Omega}\ \dist{\nullv^*}{\Demcoqd [g]_+(x)
      +\Demcoqd |h|(x)}\ge\tau;
\end{equation}

$(iii)$ function $f$ is Lipschitz continuous on $\X$ with rank
$\ell_f$;

$(iv)$ for some $\lambda>\ell_f$ and $\zeta>0$ it holds
\begin{equation}     \label{in:hypot4}
   \inf_{x\in\X\backslash\Argmin{f,\Omega}}
      \dist{\nullv^*}{\Demcoqd f(x)+\lambda\tau^{-1}\left(\Demcoqd
    [g]_+(x)+\Demcoqd |h|(x)\right)}\ge\zeta.
\end{equation}

\noindent Then, the solutions of $(\mathcal{P}_c)$ are constrained
global weak sharp minimizers and $\wshar{f,\Omega}\ge\zeta$.
\end{theorem}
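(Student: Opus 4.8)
The plan is to remove the constraints by an \emph{exact penalization}, merging the objective $f$ and the constraint violation function $p=[g]_+ + |h|$ into a single penalized function whose unconstrained global weak sharp minimizers coincide with the constrained solutions of $(\mathcal{P}_c)$, and then to invoke Theorem \ref{thm:qdwsha} for that penalized function. The three ingredients to assemble are an error bound for the feasible region $\Omega$, the exactness of the penalty, and the propagation of the nondegeneracy condition through the $\Demcoqd$-calculus.

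First I would extract from hypothesis $(ii)$ an error bound for $\Omega$. By the subadditivity recorded in Remark \ref{rem:demdif}(3), $\Demcoqd p(x)\subseteq\Demcoqd[g]_+(x)+\Demcoqd|h|(x)$, so that $(ii)$ together with the fact that distance is non-increasing under inclusion gives $\dist{\nullv^*}{\Demcoqd p(x)}\ge\tau$ for every $x\in\X\backslash\Omega$. Lemma \ref{lem:stslDemcoqdestim} then yields $\stsl{p}{x}\ge\tau$ on $\X\backslash\Omega$. Since $g$ is l.s.c. and $h$ continuous, $\Omega$ is closed and equals $\Argmin{p}=\lev{0}{p}$, while $\inf_\X p=0$ (as $\Omega\ne\varnothing$ by $(i)$); hence Theorem \ref{thm:wsharsufcondstsl} applies to $p$ and delivers
\begin{equation*}
    \tau\cdot\dist{x}{\Omega}\le [g(x)]_+ +|h(x)|,\quad\forall x\in\X.
\end{equation*}

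Next I would introduce $F=f+\lambda\tau^{-1}p$ on $\X$ and set $\mu:=\inf_{x\in\Omega}f(x)$, showing that $\inf_\X F=\mu$ and $\Argmin{F}=\Argmin{f,\Omega}$. On $\Omega$ one has $p=0$, so $F=f$; off $\Omega$, combining the error bound above with the $\ell_f$-Lipschitz continuity of $f$ from $(iii)$ gives $F(x)\ge\mu+(\lambda-\ell_f)\dist{x}{\Omega}$, which is strictly above $\mu$ precisely because $\lambda>\ell_f$ and $\Omega$ is closed. This identifies the unconstrained minimizers of $F$ with the constrained minimizers of $f$ and fixes the common minimal value $\mu$. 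It then remains to check the hypotheses of Theorem \ref{thm:qdwsha} for $F$: lower semicontinuity is immediate, and quasidifferentiability follows from the stability of the class of q.d. functions under $\max\{\cdot,0\}$, $|\cdot|$, nonnegative scaling, and sum. The nondegeneracy condition is produced by Remark \ref{rem:demdif}(3)--(4): $\Demcoqd F(x)\subseteq\Demcoqd f(x)+\lambda\tau^{-1}\left(\Demcoqd[g]_+(x)+\Demcoqd|h|(x)\right)$, whence, again reversing the inclusion inside the distance, $\dist{\nullv^*}{\Demcoqd F(x)}$ dominates the quantity appearing in $(iv)$ and is therefore $\ge\zeta$ on $\X\backslash\Argmin{F}=\X\backslash\Argmin{f,\Omega}$. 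Theorem \ref{thm:qdwsha} then gives $\zeta\cdot\dist{x}{\Argmin{F}}\le F(x)-\mu$ for all $x\in\X$; restricting to $x\in\Omega$, where $F=f$ and $\Argmin{F}=\Argmin{f,\Omega}$, is exactly the constrained weak sharpness estimate with modulus $\ge\zeta$.

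The step I expect to be the main obstacle is the exact penalization: one must verify that the single coefficient $\lambda\tau^{-1}$ is large enough to repel the penalized minimizers back into $\Omega$, which is precisely where the interplay of the error-bound constant $\tau$ from $(ii)$, the Lipschitz rank $\ell_f$, and the strict inequality $\lambda>\ell_f$ is decisive; once this is secured, the remaining work is the routine transfer of the $\Demcoqd$-calculus through the penalty and the final restriction to $\Omega$.
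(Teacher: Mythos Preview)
Your proposal is correct and follows essentially the same route as the paper: derive an error bound for $\Omega$ from hypothesis $(ii)$ via Lemma \ref{lem:stslDemcoqdestim} and Theorem \ref{thm:wsharsufcondstsl}, perform an exact penalization with penalty $\lambda\tau^{-1}p$ to reduce to the unconstrained problem for $F$, and then apply Theorem \ref{thm:qdwsha} using the $\Demcoqd$-calculus of Remark \ref{rem:demdif}(3)--(4). The only cosmetic difference is that the paper invokes the ``basic principle of exact penalization'' as a black box (with a reference to \cite{FacPan03}), whereas you supply the explicit inequality $F(x)\ge\mu+(\lambda-\ell_f)\dist{x}{\Omega}$ off $\Omega$ directly; your inline argument is in fact precisely the standard proof of that principle.
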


\begin{proof}
First of all, observe that under the hypotheses made, since it is  $\Omega
\ne\varnothing$, then by virtue of Corollary \ref{cor:stslerbo}
the following error bound holds true
\begin{equation}   \label{in:erboDemcoqd}
    \dist{x}{\Omega}\le\tau^{-1}\left([g(x)]_++|h(x)|\right),\quad
    \forall x\in\X.
\end{equation}
To prove this, notice that, being $(\mathcal{P}_c)$ q.d.,
function $[g]_++|h|$ is q.d. as well. Then, it suffices to
recall that, according to Lemma
\ref{lem:stslDemcoqdestim}, it results in
$$
    \stsl{([g]_++|h|)}{x}\ge\dist{\nullv^*}{\Demcoqd ([g]_++|h|)(x)}
    \ge\dist{\nullv^*}{\Demcoqd [g]_+(x)+\Demcoqd |h|(x)},
$$
as it is
$$
    \Demcoqd ([g]_++|h|)(x)\subseteq\Demcoqd [g]_+(x)
   +\Demcoqd |h|(x)
$$
and inequality (\ref{in:hypot2}) is in force.
In the light of the validity of error bound (\ref{in:erboDemcoqd}),
since $f$ is Lipschitz continuous by hypothesis $(iii)$, set $\Omega$
is closed and problem $(\mathcal{P}_c)$ admits global solutions,
it is possible to invoke the basic principle of exact penalization
\footnote{
For a statement of such principle the reader is referred to
\cite{FacPan03} (see Theorem 6.8.1). It is to be noted that, in
the mentioned reference, being formulated in finite dimensional
spaces, the principle is proved by using the
proximinality property enjoyed by any nonempty closed subset of
$\R^n$. Nevertheless, with a slight modification, the proof can be
rendered valid in any metric space.}.
According to it, if taking any $\ell>\ell_f$, it turns out that
$$
    \Argmin{f,\Omega}=\Argmin{f+\ell\tau^{-1}([g]_++|h|)}
    \quad\hbox{ and }\quad \inf_{x\in\Omega}f(x)=\inf_{x\in\X}
   [f(x)+\ell\tau^{-1}([g]_+(x)+|h|(x))],
$$
where it is to be remarked that the right-side solution set in
the first of the above equalities relates to an
unconstrained optimization problem. The objective function of
the latter is clearly l.s.c. and q.d. on $\X$ as well as bounded
from below (remember that $f$ is so).
Consequently, one can apply Theorem
\ref{thm:qdwsha}, after having shown that the nondegeneracy
condition (\ref{in:nodegconDemcoqd}) is fulfilled. This happens
provided that $\ell=\lambda$ as in (\ref{in:hypot4}), because
$$
    \Demcoqd\left( f+\lambda\tau^{-1}([g]_+|h|\right)(x)\subseteq
   \Demcoqd f(x)+\lambda\tau^{-1}\left(\Demcoqd
    [g]_+(x)+\Demcoqd |h|(x)\right).    
$$
Thus, it results in
$$
    \zeta\cdot\dist{x}{\Argmin{f,\Omega}}\le f(x)+\lambda\tau^{-1}
   \left([g]_+(x)+|h|(x)\right)-\inf_{x\in\Omega}f(x),\quad\forall
    x\in\X,
$$
which, for any $x\in\Omega$, gives the inequality to be proved.
\end{proof}

To complement the formulation of Theorem \ref{thm:qdconstwsha},
conditions (\ref{in:hypot2}) and (\ref{in:hypot4}) should be expressed
in terms of problem data, namely in terms of dual constructions
directly related to functions $g$ and $h$, not to $[g]_+$ and $|h|$,
whose involvement is only instrumental. This can be done by virtue
of the rich apparatus calculi, which is at disposal for q.d. functions.
Notice that, in both cases, the question consists in computing
$\Demcoqd [g]_+(x)+\Demcoqd |h|(x)$. This is carried out in the
below Remark, under an additional continuity assumption on $g$,
aimed at simplifying already involved formula\hskip-0.05cm{e}.

\begin{remark}
Given $g:\X\longrightarrow\R$ and $h:\X\longrightarrow\R$, suppose
that both are continuous and q.d. on $\X$, with
$$
    \qdif g(x)=[\qdsub g(x),-\qdsup g(x)]_\sim\quad\hbox{ and }\quad
    \qdif h(x)=[\qdsub h(x),-\qdsup h(x)]_\sim.
$$
As one expects, the basic tool for calculating $\Demcoqd [g]_+(x)
+\Demcoqd |h|(x)$ is the following well-known rule expressing
the quasidifferential of a pointwise $\max$-type function (see, for instance,
\cite{DemRub86,DemRub95}). If functions $\phi:\X\longrightarrow\R$
and $\psi:\X\longrightarrow\R$ are q.d. at $x$, with 
$$
    \qdif \phi(x)=[\qdsub \phi(x),-\qdsup \phi(x)]_\sim\quad\hbox{ and }\quad
    \qdif \psi(x)=[\qdsub \psi(x),-\qdsup \psi(x)]_\sim,
$$
and if $\phi(x)=\psi(x)$, then, defined function $\phi\vee\psi:
\X\longrightarrow\R$ as
$$
    (\psi\vee\psi)(x)=\max\{\phi(x),\,\psi(x)\},\quad x\in\X,
$$
the quasidifferential $\qdif (\phi\vee\psi)(x)=[\qdsub (\phi\vee\psi)(x),
-\qdsup (\phi\vee\psi)(x)]_\sim$ is given according to the
formula\hskip-0.05cm{e}
$$
    \qdsub (\phi\vee\psi)(x)=
    \clco\left[\left(\qdsub\phi(x)-\qdsup\psi(x)\right)
    \cup\left(\qdsub\psi(x)-\qdsup\phi(x)\right)\right]
    \quad\hbox{ and }\quad
    \qdsup (\phi\vee\psi)(x)=\qdsup\phi(x)+\qdsup\psi(x).
$$
Consequently, since it is $[g]_+=g\vee\nullv^*$, then if $x\in\X$
is such that $g(x)=0$, one obtains
\begin{equation}      \label{eq:qdifruleg}
   \qdsub[g]_+(x)=\clco\left[\qdsub g(x)\cup (-\qdsup g(x)) \right]
   \quad\hbox{ and }\quad
   \qdsup[g]_+(x)=\qdsup g(x).
\end{equation}
Analogously, since it is $|h|=h\vee(-h)$, then if $x\in\X$ is such
that $h(x)=0$, one obtains
\begin{equation}    \label{eq:qdifruleh}
   \qdsub |h|(x)=2\, \clco\left[\qdsub h(x)\cup (-\qdsup h(x)) \right]
   \quad\hbox{ and }\quad
   \qdsup  |h|(x)=\qdsup h(x)-\qdsub h(x).
\end{equation}
Now, if $x\in\X\backslash\Omega$, the violation of the equality/inequality
system defining $\Omega$ can be reduced to one
of the following seven cases, which in turn leads to a different
expression of set  $\Demcoqd [g]_+(x)+\Demcoqd |h|(x)$, on the
base of formula\hskip-0.05cm{e} (\ref{eq:qdifruleg}) and
(\ref{eq:qdifruleh}). Take into account that, because of the continuity
of $g$ and $h$, the nonnull sign of their values persists in a whole
neighbourhood of a point at which such sign is taken.

\vskip.25cm

\begin{itemize}

\item[{\bf case 1:}] $g(x)<0$ and $h(x)<0$. Since in this case locally
it is $[g]_+=\nullv^*$ and $|h|=-h$, it results in
$$
   \Demcoqd [g]_+(x)+\Demcoqd |h|(x)=\left(-\qdsup h(x)\right)
   \Demdif\qdsub h(x);
$$

\vskip.25cm

\item[{\bf case 2:}] $g(x)<0$ and $h(x)>0$. Since in this case locally
it is $[g]_+=\nullv^*$ and $|h|=h$, it results in
$$
   \Demcoqd [g]_+(x)+\Demcoqd |h|(x)=\qdsub h(x)\Demdif
   \left(-\qdsup h(x) \right);
$$

\vskip.25cm

\item[{\bf case 3:}] $g(x)=0$ and $h(x)<0$. Being in this case locally
$|h|=-h$, by recalling formula\hskip-0.05cm{e} (\ref{eq:qdifruleg})
one finds
$$
   \Demcoqd [g]_+(x)+\Demcoqd |h|(x)=\left\{
   \clco\left[\qdsub g(x)\cup (-\qdsup g(x)) \right]\Demdif 
   \left(-\qdsup g(x)\right)\right\}+\left[\left(-\qdsup h(x)\right)
   \Demdif\qdsub h(x)\right];
$$

\vskip.25cm

\item[{\bf case 4:}] $g(x)=0$ and $h(x)>0$. Being in this case locally
$|h|=h$, by recalling formula\hskip-0.05cm{e} (\ref{eq:qdifruleg})
one finds
$$
   \Demcoqd [g]_+(x)+\Demcoqd |h|(x)=\left\{
   \clco\left[\qdsub g(x)\cup (-\qdsup g(x)) \right]\Demdif 
   \left(-\qdsup g(x)\right)\right\}+\left[\qdsub h(x)\Demdif
   \left(-\qdsup h(x) \right)\right];
$$

\vskip.25cm

\item[{\bf case 5:}] $g(x)>0$ and $h(x)<0$. Being in this case locally
$[g]_+=g$ and $|h|=-h$, one obtains
$$
   \Demcoqd [g]_+(x)+\Demcoqd |h|(x)=
   \left[\qdsub g(x)\Demdif   \left(-\qdsup g(x) \right)\right]+
   \left[\left(-\qdsup h(x)\right)\Demdif\qdsub h(x)\right];
$$

\vskip.25cm

\item[{\bf case 6:}] $g(x)>0$ and $h(x)>0$. Being in this case locally
$[g]_+=g$ and $|h|=h$, one obtains
$$
   \Demcoqd [g]_+(x)+\Demcoqd |h|(x)=
   \left[\qdsub g(x)\Demdif   \left(-\qdsup g(x) \right)\right]+
   \left[\qdsub h(x)\Demdif\left(-\qdsup h(x) \right) \right];
$$

\vskip.25cm

\item[{\bf case 7:}] $g(x)>0$ and $h(x)=0$. In this case, locally
it is $[g]_+=g$. Thus, by recalling formula\hskip-0.05cm{e}
(\ref{eq:qdifruleh}), one gets
$$
   \Demcoqd [g]_+(x)+\Demcoqd |h|(x)=
   \left[\qdsub g(x)\Demdif   \left(-\qdsup g(x) \right)\right]+
   \left\{2\, \clco\left[\qdsub h(x)\cup (-\qdsup h(x)) \right]
    \Demdif\left( \qdsub h(x)-\qdsup h(x)\right)
   \right\}.
$$

\end{itemize}

Notice that, in order to calculate $\Demcoqd [g]_+(x)+\Demcoqd |h|(x)$
as it appears in condition (\ref{in:hypot4}), a further distinction is
needed, according to the fact that $x\in\Omega$ or $x\not\in\Omega$.
Since this can be carried out as done above, the details are omitted.
\end{remark}

\vskip.5cm


\subsection{A weak sharp minimality condition in terms of exhausters}     

\begin{theorem}      \label{thm:exhasutwsha}
Let $(\X,\|\cdot\|)$ be a Banach space and let $f:\X\longrightarrow\R$
be a function l.s.c. on $\X$. Suppose that $\Hadlowder{f}(x;\cdot)$
admits a lower exhauster $\Exhaustder{f}{x}$ at each point of $\X$
and
\begin{equation}    \label{in:nondegExhaustder}
    \inf_{x\in\X\backslash\Argmin{f}} \|\Exhaustder{f}{x}\|=\tau>0.
\end{equation}
Then, $f$ admits global weak sharp minimizers and $\wshar{f}\ge\tau$.
\end{theorem}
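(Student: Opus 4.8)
The plan is to reduce the statement to the strong-slope sufficient condition of Theorem~\ref{thm:wsharsufcondstsl}, just as Theorem~\ref{thm:qdwsha} was derived from Lemma~\ref{lem:stslDemcoqdestim}. The heart of the argument is the exhauster analogue of inequality~(\ref{in:stslDemcoqdest}), namely the pointwise estimate
\[
   \stsl{f}{\bar x}\ge\|\Exhaustder{f}{\bar x}\|
\]
at every $\bar x\in\X\backslash\Argmin{f}$. Granting it, hypothesis~(\ref{in:nondegExhaustder}) forces $\stsl{f}{x}\ge\|\Exhaustder{f}{x}\|\ge\tau$ for all $x\in\X\backslash\Argmin{f}$, so that the inclusion~(\ref{inc:stslwshacon}) holds and a direct appeal to Theorem~\ref{thm:wsharsufcondstsl} yields simultaneously that $f$ admits global weak sharp minimizers and that $\wshar{f}\ge\tau$.

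To establish the estimate I would first observe that, by~(\ref{in:nondegExhaustder}), every $\bar x\in\X\backslash\Argmin{f}$ has $\|\Exhaustder{f}{\bar x}\|\ge\tau>0$ and hence is \emph{not} a local minimizer of $f$: indeed, were $\bar x$ a local minimizer, one would have $\Hadlowder{f}(\bar x;v)\ge 0$ for all $v\in\X$, so $\suppf{v}{E}\ge 0$ for every $v$ and every $E\in\Exhaustder{f}{\bar x}$, and a weak${}^*$ separation argument applied to the weak${}^*$ compact convex set $E$ would give $\nullv^*\in E$, whence $\|\Exhaustder{f}{\bar x}\|=0$. Since $\bar x$ is not a local minimizer, the $\limsup$ defining the strong slope is available. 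Fixing $v\in\Sfer$ and choosing, from the definition of $\Hadlowder{f}(\bar x;\cdot)$, sequences $v'_n\to v$ and $t_n\to 0^+$ attaining the lower limit, the points $x_n=\bar x+t_nv'_n$ satisfy $x_n\to\bar x$, $x_n\ne\bar x$, and $\|x_n-\bar x\|=t_n\|v'_n\|$ with $\|v'_n\|\to 1$; therefore
\[
   \stsl{f}{\bar x}\ge\limsup_{n\to\infty}\frac{f(\bar x)-f(x_n)}{\|x_n-\bar x\|}=-\Hadlowder{f}(\bar x;v),
\]
the last equality holding since $\|v'_n\|\to1$. As $\Hadlowder{f}(\bar x;v)=\inf_{E}\suppf{v}{E}\le\suppf{v}{E_0}$ for each fixed $E_0\in\Exhaustder{f}{\bar x}$, taking the supremum over $v\in\Sfer$ gives $\stsl{f}{\bar x}\ge\sup_{v\in\Sfer}\bigl(-\suppf{v}{E_0}\bigr)$ for every such $E_0$.

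The remaining, and I expect most delicate, step is the identity
\[
   \sup_{v\in\Sfer}\bigl(-\suppf{v}{E_0}\bigr)=\dist{\nullv^*}{E_0}\qquad\text{whenever }\dist{\nullv^*}{E_0}>0.
\]
Writing $\|x^*\|=\sup_{v\in\uball}\langle x^*,v\rangle$ with $\uball$ the closed unit ball of $\X$, this rests on the minimax exchange
\[
   \dist{\nullv^*}{E_0}=\inf_{x^*\in E_0}\sup_{v\in\uball}\langle x^*,v\rangle=\sup_{v\in\uball}\inf_{x^*\in E_0}\langle x^*,v\rangle=\sup_{v\in\uball}\bigl(-\suppf{v}{E_0}\bigr),
\]
where the central equality is justified by Sion's theorem — $E_0$ being weak${}^*$ compact and convex, $\uball$ convex, and the pairing weak${}^*$ continuous and linear in $x^*$, linear in $v$ — while the last equality uses $\inf_{x^*\in E_0}\langle x^*,v\rangle=-\suppf{-v}{E_0}$ together with the symmetry $\uball=-\uball$ (equivalently, one may invoke the classical Hahn--Banach formula for the distance from a point to a weak${}^*$ compact convex set). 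A positive-homogeneity argument then shows that when this common value is positive the supremum over $\uball$ is attained on $\Sfer$, which yields the displayed identity. Combining, $\stsl{f}{\bar x}\ge\dist{\nullv^*}{E_0}$ for every $E_0\in\Exhaustder{f}{\bar x}$ with $\dist{\nullv^*}{E_0}>0$; taking the supremum over such $E_0$ and recalling that $\|\Exhaustder{f}{\bar x}\|\ge\tau>0$ (so that the sets of zero distance do not affect the supremum) gives $\stsl{f}{\bar x}\ge\|\Exhaustder{f}{\bar x}\|$, completing the estimate and the proof.
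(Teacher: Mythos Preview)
Your argument is correct, but it follows a genuinely different route from the paper's. You factor the proof through Theorem~\ref{thm:wsharsufcondstsl} by first establishing the pointwise estimate $\stsl{f}{\bar x}\ge\|\Exhaustder{f}{\bar x}\|$ --- an exhauster analogue of Lemma~\ref{lem:stslDemcoqdestim} --- and then invoking the strong-slope sufficient condition; this mirrors exactly how Theorem~\ref{thm:qdwsha} was obtained. The paper instead argues by contradiction directly from the sublevel-set characterization of Theorem~\ref{thm:wsharchalev}: it produces, via Ekeland, a point $\bar x$ at which the perturbed function $f+(\tau-\epsilon)\|\cdot-\bar x\|$ is minimized, derives the first-order inequality $0\le\Hadlowder{f}(\bar x;v)+(\tau-\epsilon)\|v\|$, and then reads off $\nullv^*\in E+(\tau-\epsilon)\B^*$ for every $E\in\Exhaustder{f}{\bar x}$ by Minkowski--H\"ormander duality, contradicting~(\ref{in:nondegExhaustder}). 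The duality step itself is handled differently: you invoke a Sion-type minimax to identify $\sup_{v\in\Sfer}(-\suppf{v}{E_0})$ with $\dist{\nullv^*}{E_0}$, whereas the paper sidesteps any minimax by using the equivalence between $\suppf{\cdot}{E+r\B^*}\ge 0$ and $\nullv^*\in E+r\B^*$. Your approach buys modularity and a clean parallel with the quasidifferentiable case; the paper's buys a self-contained argument that also illustrates how Theorem~\ref{thm:wsharchalev} can be used directly. Incidentally, your key lemma could also be obtained by transplanting the proof of Lemma~\ref{lem:stslDemcoqdestim} verbatim (local minimality of $f+(\stsl{f}{\bar x}+\epsilon)\|\cdot-\bar x\|$ gives $\nullv^*\in E+(\stsl{f}{\bar x}+\epsilon)\B^*$ for each $E$), which would spare you the minimax step.
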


\begin{proof}
Ab absurdo, assume that $f$ has no global weak sharp minimizers.
This amounts to suppose that either $\Argmin{f}=\varnothing$ or
inequality (\ref{in:defwsharmin}) is not valid. In both cases,
according to the characterization provided by Theorem \ref{thm:wsharchalev}
there must exist $\hat x\in\X$ and $\hat\alpha>\inff$ such that $\hat x
\not\in\lev{\hat\alpha}{f}$ and
$$
    f(\hat x)<\inff+\tau\cdot\dist{\hat x}{\lev{\hat\alpha}{f}}.
$$
Since this inequality is strict, it is possible to find $\epsilon>0$ such that
$$
     \epsilon<\min\{\tau,\dist{\hat x}{\lev{\hat\alpha}{f}}\}
$$
and
$$
    f(\hat x)<\inff+(\tau-\epsilon)(\dist{\hat x}{\lev{\hat\alpha}{f}}-\epsilon).
$$
Thus, the Ekeland variational principle ensures the existence of
$\bar x\in\X$ such that
\begin{equation}     \label{in:EVP2tris}
  d(\bar x,\hat x)\le\dist{\hat x}{\lev{\hat\alpha}{f}}-\epsilon
\end{equation}
and
$$
    f(\bar x)<f(x)+(\tau-\epsilon)\|x-\bar x\|,\quad\forall x\in
    \X\backslash\{\bar x\}.
$$
As a consequence of the fact that $\bar x$ (globally) minimizes
function $f(\cdot)+(\tau-\epsilon)\|\cdot-\bar x\|$, one obtains
\begin{eqnarray*}  
 0 &\le &\Hadlowder{(f(\cdot)+(\tau-\epsilon)\|\cdot-\bar x\|)}(\bar x;v)
  \le\Hadlowder{f}(\bar x;v)+(\tau-\epsilon)\Hadupder{\|\cdot-\bar x\|}
  (\bar x;v)  \\
   &=& \Hadlowder{f}(\bar x;v)+(\tau-\epsilon)\|\cdot-\bar x\|'(\bar x;v),
   \quad\forall v\in\X
\end{eqnarray*}
(remember that $\|\cdot-\bar x\|$ is Lipschitz continuous and
directionally differentiable at $\bar x$).
By hypothesis $\Hadlowder{f}(\bar x;\cdot)$ admits a lower exhauster
$\Exhaustder{f}{\bar x}$. Thus, being
$$
    \Exhaustder{(\tau-\epsilon)\|\cdot-\bar x\|}{\bar x}=
   \{(\tau-\epsilon)\B^*\},
$$
from the last inequalities it follows
$$
    0\le \inf_{E\in\Exhaustder{f}{\bar x}}\ \suppf{v}{E}+(\tau-\epsilon)
     \suppf{v}{\B^*},\quad\forall v\in\X.
$$
The last inequality implies in the light of the Minkowski-H\"ormander
duality that for every $E\in\Exhaustder{f}{\bar x}$ it holds
$$
    0\le \suppf{v}{E}+(\tau-\epsilon)\suppf{v}{\B^*}=
     \suppf{v}{E+(\tau-\epsilon)\B^*},\quad\forall 
    v\in\X.
$$
Consequently, one finds
$$
    \nullv^*\in E+(\tau-\epsilon)\B^*,\quad\forall
   E\in\Exhaustder{f}{\bar x}.
$$
This inclusion implies that in every $E\in\Exhaustder{f}{\bar x}$
there must exist an element $v^*_E$ such that $\|v^*_E\|\le\tau-\epsilon$,
so that $\dist{\nullv^*}{E}\le\tau-\epsilon$. Thus, it is
$$
    \sup_{E\in\Exhaustder{f}{\bar x}}\dist{\nullv^*}{E}\le\tau-\epsilon,
$$
what contradicts hypothesis (\ref{in:nondegExhaustder}), if
$x\not\in\Argmin{f}$.
Therefore one is forced to deduce that $\bar x\in\Argmin{f}(\bar x)
\ne\varnothing$. To reach the concluding contradiction it is
sufficient to observe that, by virtue of inequality (\ref{in:EVP2tris}),
it is
$$
   d(\hat x,\Argmin{f})\le\dist{\hat x}{\lev{\hat\alpha}{f}}-\epsilon,
$$
whereas, being $\Argmin{f}\subseteq\lev{\hat\alpha}{f}$, it should
be at the same time
$$
   d(\hat x,\Argmin{f})\ge\dist{\hat x}{\lev{\hat\alpha}{f}}.
$$
Again the characterization provided by Theorem \ref{thm:wsharchalev}
allows to obtain the estimate of $\wshar{f}$ asserted in the thesis.
\end{proof}

\begin{remark}
Theorem \ref{thm:exhasutwsha} can be regarded as a further extension
to nonconvex functions of Corollary \ref{cor:convwsha}. In fact,
it applies to a class of functions that may not be q.d.. Concerning such
class, it is worth noting that every function defined on a normed
vector space $(\X,\|\cdot\|)$ and locally Lipschitz around a reference point
$x\in\X$ has Hadamard derivatives at $x$ admitting lower exhausters
(see, for instance, \cite{Cast00}). Again, if a Banach space $(\X,\|\cdot\|)$ is locally
uniformly convex, i.e.
$$
    \forall u\in\Sfer,\ \forall\epsilon>0\quad\hbox{it holds}\quad
    \sup_{v\in\Sfer\backslash\inte\ball{u}{\epsilon}}
    \left\|\frac{u+v}{2}\right\|<1,
$$
every function $f:\X\longrightarrow\R$, whose Hadamard lower
derivative at $x$ is continuous with respect to the argument direction,
admits such class as $\Exhaustder{f}{x}$,
as a consequence of known results on the dual representation of
p.h. functions (see, for instance, \cite{Uder00}). The class of all locally
uniformly convex Banach spaces is actually rather wide, as it
includes for example all reflexive Banach spaces.
\end{remark}

The next result provides a condition for constrained global solutions
of $(\mathcal{P}_c)$ to be weak sharp minimizers, in terms of
lower exhausters. In this case, the constraints are left in a
geometrical form, in the sense that they are not formalized by
an equality/inequality system.
In what follows, by $N^\circ(\Omega,x)$ the Clarke normal cone
to set $\Omega$ at a point $x\in\X$ is denoted. In the case $x\not
\in\Omega$ set $N^\circ(\Omega,x)=\varnothing$.

\begin{theorem}      \label{thm:exhasutconstwsha}
Let $(\X,\|\cdot\|)$ be a Banach space, let $f:\X\longrightarrow\R$
be a Lipschitz continuous function on $\X$, with rank $\ell_f>0$,
and let $\Omega\subseteq\X$ be a nonempty closed set.
Suppose that

(i) $\Argmin{f,\Omega}\ne\emptyset$;

(ii) for some $\lambda>\ell_f$ it is
\begin{equation}
   \inf_{x\in\Omega\backslash\Argmin{f,\Omega}}
   \sup_{E\in\Exhaustder{f}{x}}\dist{\nullv^*}{E+\lambda
                 (N^\circ(\Omega,x)\cap\B^*)}=\zeta>0.
\end{equation}

\noindent Then, the solutions of $(\mathcal{P}_c)$ are constrained
global weak sharp minimizers and $\wshar{f,\Omega}\ge\zeta$.
\end{theorem}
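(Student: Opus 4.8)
The plan is to reduce the constrained problem to an unconstrained one by exact penalization and then to run the scheme of Theorem \ref{thm:exhasutwsha}. Concretely, I would set $F=f+\lambda\,\dist{\cdot}{\Omega}$. Since $f$ is Lipschitz of rank $\ell_f$, the set $\Omega$ is closed and $\lambda>\ell_f$, the basic exact penalization principle (the very one invoked in the proof of Theorem \ref{thm:qdconstwsha}) yields $\inf_{y\in\X}F(y)=\inf_{y\in\Omega}f(y)$ and $\Argmin{F}=\Argmin{f,\Omega}$; moreover $F$ is l.s.c.\ and bounded from below. As $F(x)=f(x)$ on $\Omega$, the constrained weak sharpness inequality to be proved is exactly the restriction to $\Omega$ of the weak sharpness inequality for $F$, so it suffices to establish $\zeta\cdot\dist{x}{\Argmin{F}}\le F(x)-\inf_{y\in\X}F(y)$ for every $x\in\Omega$.

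I would then mimic the proof of Theorem \ref{thm:exhasutwsha} applied to $F$. Assuming the inequality fails, Theorem \ref{thm:wsharchalev} supplies $\hat x$ and $\hat\alpha>\inf_{\X}F$ with $\hat x\notin\lev{\hat\alpha}{F}$ and $F(\hat x)<\inf_{\X}F+\zeta\cdot\dist{\hat x}{\lev{\hat\alpha}{F}}$; after fixing a small $\epsilon>0$, the Ekeland variational principle produces $\bar x$ that globally minimizes $F(\cdot)+(\zeta-\epsilon)\|\cdot-\bar x\|$ and satisfies $d(\bar x,\hat x)\le\dist{\hat x}{\lev{\hat\alpha}{F}}-\epsilon$. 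For a feasible $\bar x\in\Omega$ the computation is clean: global minimality gives, for every $v$,
\[ 0\le\Hadlowder{F}(\bar x;v)+(\zeta-\epsilon)\|v\|\le\Hadlowder{f}(\bar x;v)+\lambda\,\Hadupder{\dist{\cdot}{\Omega}}(\bar x;v)+(\zeta-\epsilon)\|v\|. \]
Here I would use the classical identity $\Clasubd\dist{\cdot}{\Omega}(\bar x)=N^\circ(\Omega,\bar x)\cap\B^*$ (see \cite{Clar83}), giving $\Hadupder{\dist{\cdot}{\Omega}}(\bar x;v)\le\suppf{v}{N^\circ(\Omega,\bar x)\cap\B^*}$, together with the lower--exhauster inequality $\Hadlowder{f}(\bar x;v)\le\suppf{v}{E}$ valid for each $E\in\Exhaustder{f}{\bar x}$. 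Passing to support functions and applying the Minkowski--H\"ormander duality exactly as in Theorem \ref{thm:exhasutwsha}, I obtain $\nullv^*\in E+\lambda(N^\circ(\Omega,\bar x)\cap\B^*)+(\zeta-\epsilon)\B^*$ for every $E$, whence $\sup_{E}\dist{\nullv^*}{E+\lambda(N^\circ(\Omega,\bar x)\cap\B^*)}\le\zeta-\epsilon<\zeta$. If $\bar x\in\Omega\backslash\Argmin{f,\Omega}$ this contradicts hypothesis (ii); hence $\bar x\in\Argmin{f,\Omega}=\Argmin{F}$, and then $\dist{\hat x}{\Argmin{F}}\le d(\hat x,\bar x)\le\dist{\hat x}{\lev{\hat\alpha}{F}}-\epsilon$ collides with $\Argmin{F}\subseteq\lev{\hat\alpha}{F}$, closing the argument precisely as in Theorem \ref{thm:exhasutwsha}.

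The delicate point — and the step I expect to be the main obstacle — is that hypothesis (ii) is imposed only on $\Omega\backslash\Argmin{f,\Omega}$, so the whole scheme hinges on guaranteeing that the Ekeland point $\bar x$ is \emph{feasible}. This is exactly where $\lambda>\ell_f$ must be exploited. I would argue that on $\X\backslash\Omega$ the penalty dominates: since $\dist{\cdot}{\Omega}$ has unit strong slope off $\Omega$ and $f$ is $\ell_f$-Lipschitz, one gets $\stsl{F}{x}\ge\lambda-\ell_f$ for every $x\notin\Omega$, while the global minimality of $\bar x$ forces, as in the proof of Theorem \ref{thm:wsharsufcondstsl}, $\stsl{F}{\bar x}\le\zeta-\epsilon$. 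Reconciling these two facts is what pins $\bar x$ inside $\Omega$, and it is precisely here that the interplay between the modulus $\zeta$, the Lipschitz rank $\ell_f$ and the penalty parameter $\lambda$ has to be handled with care; the margin $\lambda-\ell_f$ furnished by the exact penalization is the quantitative device that drives an infeasible candidate back onto the feasible set, after which the feasible computation above applies verbatim and yields $\wshar{f,\Omega}\ge\zeta$.
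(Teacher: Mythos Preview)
Your overall strategy coincides with the paper's: reduce $(\mathcal{P}_c)$ to the unconstrained minimization of $F=f+\lambda\,\dist{\cdot}{\Omega}$ via exact penalization, then rerun the Ekeland/exhauster argument of Theorem~\ref{thm:exhasutwsha} for $F$. The computation at a feasible Ekeland point $\bar x\in\Omega$ (splitting the Hadamard lower derivative, passing to the Clarke derivative of the distance, the support-function step, the inclusion $\nullv^*\in E+\lambda(N^\circ(\Omega,\bar x)\cap\B^*)+(\zeta-\epsilon)\B^*$, and the final sublevel-set contradiction) is exactly the paper's.

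Where you depart from the paper is precisely the step you flag as delicate --- forcing $\bar x\in\Omega$ --- and your proposed device does not close. Comparing $\stsl{F}{\bar x}\le\zeta-\epsilon$ with $\stsl{F}{x}\ge\lambda-\ell_f$ for $x\notin\Omega$ yields a contradiction only when $\zeta-\epsilon<\lambda-\ell_f$, i.e.\ essentially when $\zeta\le\lambda-\ell_f$; nothing in the hypotheses imposes such a relation. For instance, take $\X=\R$, $\Omega=[0,\infty)$, $f=|\cdot|$ (so $\ell_f=1$), $\lambda=3/2$, and the singleton lower exhauster $\Exhaustder{f}{x}=\{\{1\}\}$ for $x>0$: then $\zeta=1>\lambda-\ell_f=1/2$, and your slope comparison is inconclusive. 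As written, this is a genuine gap. The paper handles feasibility by a different route: it keeps $\partial^\circ\dist{\cdot}{\Omega}(\bar x)$ in the chain, replaces it by $N^\circ(\Omega,\bar x)\cap\B^*$ via the formula from \cite{Clar83}, and then invokes the standing convention $N^\circ(\Omega,x)=\varnothing$ for $x\notin\Omega$ together with $A+\varnothing=\varnothing$, so that the established inclusion $\nullv^*\in E+\lambda(N^\circ(\Omega,\bar x)\cap\B^*)+(\zeta-\epsilon)\B^*$ itself forces $\bar x\in\Omega$.
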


\begin{proof}
As in the constrained case already treated, let us start with
observing that, under the current assumptions, one can reduce
the constrained problem  $(\mathcal{P}_c)$ to an unconstrained one.
Indeed, since $f$ is Lipschitz continuous, with rank $\ell_f$, and
$\Argmin{f,\Omega}\ne\emptyset$, then according to the basic
penalization principle one has that, for every $\ell>\ell_f$, it holds
$$
    \Argmin{f,\Omega}=\Argmin{f+\ell\dist{\cdot}{\Omega}}.
$$
Henceforth it is possible to follow the argument proposed in the
proof of Theorem \ref{thm:exhasutwsha}, with function $f$ replaced
here by $f+\lambda\dist{\cdot}{\Omega}$. To this regard, observe that,
since function $f$ is Lipschitz continuous, its Hadamard lower
derivative admits lower (also upper) exhausters at each point
of $\X$. In the present case, since $\Argmin{f,\Omega}\ne
\emptyset$, one needs only to assume ab absurdo that inequality
(\ref{in:wshachar}) is not true. Thus, fixed $\hat x\in\X$ and
$\epsilon>0$, by proceeding as in the proof of Theorem \ref{thm:exhasutwsha}
one gets the existence of $\bar x\in\X$ such that
$$
   d(\bar x,\hat x)\le\dist{\hat x}{\lev{\hat\alpha}
   {(f+\lambda\dist{\cdot}{\Omega})}}-\epsilon
$$
and
\begin{eqnarray*}  
 0 &\le &\Hadlowder{(f(\cdot)+\lambda\dist{\cdot}{\Omega}+
        (\zeta-\epsilon)\|\cdot-\bar x\|)}(\bar x;v) \\
     &\le & \Hadlowder{f}(\bar x;v)+\Hadupder{\dist{\cdot}{\Omega}}(\bar x;v)
  +(\zeta-\epsilon)\|\cdot-\bar x\|'(\bar x;v) \\
     &\le & \Hadlowder{f}(\bar x;v)+\dist{\cdot}{\Omega}^\circ(\bar x;v)
  +(\zeta-\epsilon)\|\cdot-\bar x\|'(\bar x;v), \quad\forall v\in\X,
\end{eqnarray*}
where $\dist{\cdot}{\Omega}^\circ(\bar x;v)$ denotes the Clarke derivative
of function $\dist{\cdot}{\Omega}$ at $\bar x$ in the direction $v$. By
recalling the dual representation
$$
   \dist{\cdot}{\Omega}^\circ(\bar x;v)=
   \max_{x^*\in \partial^\circ\dist{\cdot}{\Omega}(\bar x)}\ 
   \langle x^*,v\rangle,\quad\forall v\in\X,
$$
and the fact that
$$
   \partial^\circ\dist{\cdot}{\Omega}(\bar x)=N^\circ(\Omega,\bar x)
   \cap\B^*
$$
(see, for instance, \cite{Clar83}), from the last inequalities one obtains
$$
    0\le \inf_{E\in\Exhaustder{f}{\bar x}}\ \suppf{v}{E}+
       \suppf{v}{\lambda(N^\circ(\Omega,\bar x) \cap\B^*)}+
     (\zeta-\epsilon)\suppf{v}{\B^*},\quad\forall v\in\X.
$$
Thus, for every $E\in\Exhaustder{f}{\bar x}$ one has
$$
    \nullv^*\in E+\lambda(N^\circ(\Omega,\bar x) \cap\B^*)
    + (\zeta-\epsilon)\B^*.
$$
The last inclusion implies the fact that $\bar x\in\Omega$ and
the existence of $v^*\in E+\lambda
(N^\circ(\Omega,\bar x) \cap\B^*)$, with $\|v^*\|\le\zeta-\epsilon$,
what leads to deduce that $\bar x\in\Argmin{f,\Omega}$.
This conclusion, if reasoning as in the proof of Theorem 
\ref{thm:exhasutwsha}, allows one to reach the desired
contradiction.
\end{proof}

\vskip1cm

\bibliographystyle{amsplain}

\vskip1cm

\end{document}